\documentclass[preprint,12pt]{elsarticle}
\usepackage{amsmath,amssymb,amsthm}
\usepackage{graphicx}
\usepackage{color}

\makeatletter
\def\ps@pprintTitle{%
  \let\@oddhead\@empty
  \let\@evenhead\@empty
  \def\@oddfoot{\reset@font\hfil\thepage\hfil}
  \let\@evenfoot\@oddfoot}
\makeatother

\textwidth142mm  \hoffset-9mm \topmargin-10mm \textheight225mm
\setlength\arraycolsep{2pt}

\newtheorem{lettertheorem}{Theorem}

\newtheorem{defin}{Definition}[section]

\newtheorem{theorem}[defin]{Theorem}
\newtheorem{lemma}[defin]{Lemma}

\newtheorem{qu}{Question}
\newtheorem{exa}[defin]{Example}




\begin{document}

\begin{frontmatter}

\title{Three results on transcendental meromorphic solutions of certain  nonlinear differential equations}

\author[mymainaddress]{Nan Li\corref{cor1}} 
\ead{nanli32787310@163.com}


\author[mysecondaryaddress]{Lianzhong Yang}
\ead{lzyang@sdu.edu.cn}

\cortext[cor1]{Corresponding author: Nan Li}

\address[mymainaddress]{University of Jinan, School of Mathematical Sciences, Jinan, Shandong, 250022, P.R. China}

\address[mysecondaryaddress]{School of Mathematics, Shandong University,
  Jinan, Shandong Province, 250100, P.R.China}

\begin{abstract}
In this paper, we study the  transcendental meromorphic  solutions
  for the nonlinear differential equations:  $f^{n}+P(f)=R(z)e^{\alpha(z)}$
  and $f^{n}+P_{*}(f)=p_{1}(z)e^{\alpha_{1}(z)}+p_{2}(z)e^{\alpha_{2}(z)}$
  in the complex plane, where $P(f)$ and $P_{*}(f)$ are differential polynomials in $f$ of
  degree $n-1$ with coefficients
  being small functions and rational functions respectively,  $R$ is a non-vanishing small function of $f$,  $\alpha$ is a nonconstant entire function,  $p_{1}, p_{2}$ are non-vanishing rational functions, and $\alpha_{1}, \alpha_{2}$  are nonconstant polynomials. Particularly,  we consider the  solutions
 of the second equation when $p_{1}, p_{2}$ are nonzero constants, and
 $\deg \alpha_{1}=\deg \alpha_{2}=1$. Our results are improvements and complements of  Liao (Complex Var. Elliptic Equ. 2015, 60(6): 748--756),
 and Rong-Xu (Mathematics 2019, 7, 539), etc., which partially answer a question proposed by Li (J. Math. Anal. Appl. 2011, 375: 310--319).
\end{abstract}

\begin{keyword}
Meromorphic functions \sep nonlinear differential equations \sep
small functions \sep differential polynomials.

\MSC[2010] 34M05 \sep 30D30 \sep 30D35
\end{keyword}
\end{frontmatter}

\section{Introduction}

Let $f(z)$ be a transcendental meromorphic function in the complex plane
$\mathbb{C}$.  We assume that the reader is familiar with the standard notations and main results in Nevanlinna theory (see \cite{Hayman},\cite{Laine},\cite{Yi1}).
Throughout this paper, the term $S(r,f)$ always has the property that
$S(r,f)=o(T(r,f))$ as $r\to \infty$, possibly outside a set $E$ (which is not necessarily the same at each occurrence) of finite
linear measure. A meromorphic function $a(z)$ is said to be a small function
with respect to $f(z)$ if and only if $T(r,a)=S(r,f)$. In addition,
$N_{1)}(r,1/f)$ and $N_{(2}(r,1/f)$  are used to denote the counting functions corresponding to simple  and multiple zeros of $f$, respectively.

In the past few decades,  many scholars, see [7-10] etc., focus on the solutions of the nonlinear differential equations of the form
\begin{equation}\label{introeq1}
  f^{n}+P(f) =h,
\end{equation}
where $P(f)$ denotes a differential polynomial in $f$
 of degree at most
 $n-2$, and
$h$ is a given  meromorphic function.

In 2015, Liao  \cite{LIAO2015} investigated the forms of
meromorphic solutions of equation \eqref{introeq1} for specific
$h$, and obtained the following result.
\begin{lettertheorem}\label{thmAB}
  Let $n\geq 2$ and $P(f)$ be a differential polynomial in $f$ of
  degree $d$ with rational functions as its coefficients. Suppose that
  $p$ is a non-zero rational function, $\alpha$ is a non-constant polynomial
  and $d\leq n-2$. If the following differential equation
   \begin{eqnarray}\label{introeq1ddd}
     f^{n}+P(f)=p(z)e^{\alpha(z)},
   \end{eqnarray}
 admits a meromorphic function $f$ with finitely many poles, then
  $f$ has the following form $f(z)=q(z)e^{r(z)}$ and $P(f)\equiv 0$,
  where $q(z)$ is a rational function and $r(z)$ is a polynomial
  with $q^{n}=p, nr(z)=\alpha(z)$. In particular, if $p$ is a
  polynomial, then $q$ is a polynomial, too.
  \end{lettertheorem}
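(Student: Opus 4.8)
The plan is to differentiate \eqref{introeq1ddd} so as to eliminate the exponential factor, and then to exploit the degree restriction $d\le n-2$ through a Clunie-type argument. First I would record two preliminary facts. The solution $f$ must be transcendental, since otherwise the left-hand side of \eqref{introeq1ddd} would be rational while $pe^{\alpha}$ is transcendental; consequently $\log r=S(r,f)$, and because $f$ has only finitely many poles, $N(r,f)=S(r,f)$. The same applies to every $f^{(j)}$ and to any differential polynomial in $f$ with rational coefficients. Writing $\beta:=\alpha'+p'/p$, which is a rational function (hence a small function of $f$), differentiation of \eqref{introeq1ddd} and elimination of $e^{\alpha}$ yields
\begin{equation*}
 f^{\,n-1}\bigl(nf'-\beta f\bigr)=\beta P(f)-\bigl(P(f)\bigr)',
\end{equation*}
whose right-hand side is a differential polynomial in $f$ of degree at most $d\le n-2$.

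Set $L:=nf'-\beta f$. Since the explicit factor is $f^{\,n-1}$ and the degree $d\le n-2$ of the right-hand side does not exceed $n-1$, Clunie's lemma (see, e.g., \cite{Laine}) gives $m(r,L)=S(r,f)$; as $L$ has finitely many poles, this upgrades to $T(r,L)=S(r,f)$, so $L$ is a small function of $f$. The key step is to show that in fact $L\equiv 0$. If $L\not\equiv 0$, then $f^{\,n-1}=\bigl(\beta P(f)-(P(f))'\bigr)/L$, and comparing Nevanlinna characteristics (using $T(r,L)=S(r,f)$ together with the degree bound on the numerator) forces $(n-1)T(r,f)\le (n-2)T(r,f)+S(r,f)$, which is impossible for a transcendental $f$. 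I expect this to be the main obstacle, and it is precisely here that the hypothesis $d\le n-2$, rather than merely $d<n$, is indispensable: with $d=n-1$ the inequality becomes vacuous.

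Once $L\equiv 0$, we obtain simultaneously $nf'=\beta f$ and $\beta P(f)=(P(f))'$. The second relation integrates to $P(f)=c\,pe^{\alpha}$ for some constant $c$, and substitution into \eqref{introeq1ddd} gives $f^{n}=(1-c)pe^{\alpha}$, where $c\neq 1$ (otherwise $f\equiv 0$, forcing $pe^{\alpha}=P(0)$, a rational function). Then $g:=fe^{-\alpha/n}$ is meromorphic with $g^{n}=(1-c)p$ rational, whence $T(r,g)=\tfrac1n T(r,g^{n})=O(\log r)$; a meromorphic function of such slow growth is rational. Thus $g=:q$ is rational and $f=q\,e^{\alpha/n}$, so one may take $r=\alpha/n$, giving $nr=\alpha$.

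It remains to prove $c=0$. Since $f=qe^{\alpha/n}$, each derivative has the form $f^{(j)}=b_j e^{\alpha/n}$ with $b_j$ rational, so grouping the monomials of $P(f)$ by total degree yields $P(f)=\sum_{m=0}^{d}C_m e^{m\alpha/n}$ with $C_m$ rational, while $c\,pe^{\alpha}=c\,p\,e^{n\alpha/n}$. Because $d<n$, the exponents $0,\tfrac{\alpha}{n},\dots,\tfrac{d\alpha}{n},\alpha$ are pairwise distinct, so Borel's theorem forces every coefficient of the resulting exponential sum to vanish; in particular $cp\equiv 0$, hence $c=0$. Therefore $P(f)\equiv 0$, $f^{n}=pe^{\alpha}$ and $q^{n}=p$. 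Finally, if $p$ is a polynomial, then writing $q$ in lowest terms shows that $q^{n}=p$ admits no poles, so $q$ must be a polynomial as well.
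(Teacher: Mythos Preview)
Theorem~A is not proved in this paper; it is quoted from Liao \cite{LIAO2015} as background, so there is no in-paper proof to compare your argument against. That said, your proposal is correct and self-contained: the elimination of $e^{\alpha}$ via $\beta=\alpha'+p'/p$, the Clunie step giving $T(r,nf'-\beta f)=S(r,f)$, the growth contradiction $(n-1)T(r,f)\le(n-2)T(r,f)+S(r,f)$ when $L\not\equiv0$ (which is exactly where $d\le n-2$ is used), the integration to $f=q e^{\alpha/n}$, and the Borel-type identification forcing $c=0$ all go through as written. The only place one might want an extra word is the bound $T\bigl(r,\beta P(f)-(P(f))'\bigr)\le(n-2)T(r,f)+S(r,f)$: this needs both the standard estimate $m(r,P_d(f))\le d\,m(r,f)+S(r,f)$ and the observation that the counting function is $S(r,f)$ because $f$ has finitely many poles---you mention the latter at the outset, so the step is justified.

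For context, the paper's own contribution (Theorem~\ref{thmABABnew1}) treats the complementary case $d=n-1$, and there the authors use a Tumura--Clunie factorization (Lemma~\ref{lemma22}) rather than a direct Clunie argument; your approach would indeed break down at the growth inequality precisely when $d=n-1$, consistent with the paper's remark that an additive small-function term $\gamma$ can appear in that regime.
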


 If the condition $d\leq n-2$  is omitted, then the  conclusions in Theorem~\ref{thmAB} can not hold. For example,
 $f_{0}(z)=e^{z}-1$ is a solution of the equation $f^{2}+f'+f=e^{2z}$, here
$n=2$ and $d=1=n-1$. So it's natural to ask what will happen
to the solutions of  equation \eqref{introeq1ddd} when
$d=n-1$? In this paper, we study this problem and obtain the
 following result, which is a complement of Theorem~\ref{thmAB}.

\begin{theorem}\label{thmABABnew1}
 Let $n\geq 2$ be an integer and $P(f)$ be a differential polynomial in $f$ of
  degree $n-1$ with coefficients
  being small functions. Then for any
 entire function $\alpha$ and any small
  funtion $R$, if the equation
  \begin{equation}\label{thmABABneweq1}
    f^{n}+P(f)=R(z)e^{\alpha(z)}
  \end{equation}
 possesses a meromorphic solution $f$
  with $N(r,f)=S(r,f)$, then
$f$ has the following form:
 \begin{eqnarray*}
   f(z)=s(z)e^{\alpha(z)/n}+\gamma(z),
 \end{eqnarray*}
where $s$ and $\gamma$ are small functions of $f$ with $s^{n}=R$.
\end{theorem}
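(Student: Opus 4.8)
The plan is to combine a growth comparison, a Clunie-type estimate, a linearization of the differential polynomial, and the solution of a first-order linear equation. Since $\alpha$ is nonconstant entire, $Re^{\alpha}$ is not a small function, and a rational $f$ cannot satisfy \eqref{thmABABneweq1}; hence $f$ is transcendental. First I would compare characteristics: using $\deg P\le n-1$ and the Valiron--Mohon'ko estimate $T(r,P(f))\le (n-1)T(r,f)+S(r,f)$ together with $f^{n}=Re^{\alpha}-P(f)$, one gets $T(r,f)\le T(r,e^{\alpha})+S(r,f)$ and $T(r,e^{\alpha})=O(T(r,f))$, so the error classes $S(r,e^{\alpha})$ and $S(r,f)$ coincide. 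The crucial consequence, and the point where the generality of an entire (possibly transcendental) $\alpha$ must be handled, is that $\alpha'=(e^{\alpha})'/e^{\alpha}$ satisfies $m(r,\alpha')=S(r,e^{\alpha})=S(r,f)$; since $\alpha'$ is entire this gives $T(r,\alpha')=S(r,f)$, so $\beta:=\alpha'+R'/R$ is a small function of $f$.

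Next I would differentiate \eqref{thmABABneweq1}. Writing $\beta=(Re^{\alpha})'/(Re^{\alpha})$ and eliminating $e^{\alpha}$ from the derivative of the equation yields the identity
\[ f^{n-1}\bigl(nf'-\beta f\bigr)=\beta P(f)-\bigl(P(f)\bigr)', \]
whose right-hand side is a differential polynomial in $f$ of degree at most $n-1$ with small coefficients. Applying Clunie's lemma (with the explicit factor $f^{n-1}$ and a right-hand side of degree $\le n-1$) gives $m\bigl(r,nf'-\beta f\bigr)=S(r,f)$; combined with $N(r,f)=S(r,f)$ this shows that $F:=nf'-\beta f$ is itself a small function of $f$. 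This linear relation is the engine of the proof: it lets me write $f'=\tfrac1n(\beta f+F)$ and, by repeated differentiation, $f^{(j)}=A_{j}f+B_{j}$ with all $A_{j},B_{j}$ small. Substituting these into $P(f)$ collapses the differential polynomial into an ordinary polynomial in $f$, so that \eqref{thmABABneweq1} becomes $f^{n}+b_{n-1}f^{n-1}+\cdots+b_{0}=Re^{\alpha}$ with small coefficients $b_{j}$.

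Now I would remove the subleading term by the substitution $G:=f-\gamma$ with $\gamma:=-b_{n-1}/n$, which is small; this is the natural analogue of completing the $n$th power and is exactly what the example $e^{z}-1$ (with $\gamma=-1$) suggests. After this shift the equation reads $G^{n}+d_{n-2}G^{n-2}+\cdots+d_{0}=Re^{\alpha}$ with small $d_{j}$ and no $G^{n-1}$ term. Differentiating this relation and substituting $nG'=\beta G+\widetilde F$, where $\widetilde F:=F-n\gamma'+\beta\gamma$ is small, produces a polynomial identity in $G$ whose term of top degree $n-1$ has coefficient exactly $\widetilde F$; since $G$ is transcendental and all the coefficients are small, a Valiron--Mohon'ko degree count forces $\widetilde F\equiv 0$, that is, $nG'=\beta G$.

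Finally, $nG'/G=\beta=(Re^{\alpha})'/(Re^{\alpha})$ integrates to $G^{n}=c_{1}Re^{\alpha}$ for a nonzero constant $c_{1}$. Setting $s:=Ge^{-\alpha/n}$ gives a genuine meromorphic function with $s^{n}=c_{1}R$, hence small, and $f=G+\gamma=se^{\alpha/n}+\gamma$. To pin down $c_{1}$ I substitute $G^{n}=c_{1}Re^{\alpha}$ back into the shifted equation, obtaining $(c_{1}-1)Re^{\alpha}=-\sum_{j\le n-2}d_{j}s^{j}e^{j\alpha/n}$; the left side has characteristic $T(r,e^{\alpha})+S(r,f)$, while the right side has characteristic at most $\tfrac{n-2}{n}T(r,e^{\alpha})+S(r,f)$, so $c_{1}=1$ and $s^{n}=R$, completing the proof. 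I expect the main obstacle to be the two structural steps around which the routine estimates are built: verifying that $\alpha'$ (and hence $\beta$) is small when $\alpha$ is allowed to be transcendental entire, and establishing $\widetilde F\equiv 0$ after the correct normalization $\gamma$, which is what upgrades the mere smallness of $nf'-\beta f$ to the exact homogeneous relation yielding the stated form.
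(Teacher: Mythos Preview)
Your argument is correct, but it takes a substantially longer route than the paper's proof. The paper observes that $g:=Re^{\alpha}$ satisfies $N(r,1/g)\le N(r,1/R)=S(r,f)$ (since $e^{\alpha}$ is zero-free), so together with $N(r,f)=S(r,f)$ the hypotheses of the generalized Tumura--Clunie theorem (Lemma~\ref{lemma22}, due to Hua) are met, and one line gives $Re^{\alpha}=(f+\gamma)^{n}$ with $\gamma$ small; taking $n$th roots finishes the proof. Your approach, by contrast, rebuilds this machinery from scratch: you apply the ordinary Clunie lemma to the differentiated equation to get $F=nf'-\beta f$ small, linearize all derivatives as $f^{(j)}=A_{j}f+B_{j}$, collapse $P(f)$ to an algebraic polynomial in $f$, complete the $n$th power to kill the $f^{n-1}$ term, and then run a second degree-count to force $\widetilde F\equiv 0$. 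This is essentially a reproof of the Tumura--Clunie theorem specialized to the present right-hand side. What your approach buys is self-containment and an explicit identification of $\gamma=-b_{n-1}/n$; what the paper's approach buys is brevity and a clear signal that the result is an immediate corollary of a known structural lemma. Two minor remarks: the estimate you call ``Valiron--Mohon'ko'' for $T(r,P(f))$ is really the differential-polynomial bound $m(r,P(f))\le (n-1)m(r,f)+S(r,f)$ combined with $N(r,f)=S(r,f)$; and in your final degree comparison forcing $\widetilde F\equiv 0$, the clean way to avoid worrying about zeros of $\widetilde F$ is to use $m(r,1/\widetilde F)\le T(r,\widetilde F)+O(1)=S(r,f)$, which your outline implicitly does.
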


The following Example~\ref{exaz1}
  shows that the case  in Theorem~\ref{thmABABnew1} occurs.

\begin{exa}\label{exaz1}
  $f_{0}=e^{z}+1$ is a solution of the following equation
 \begin{eqnarray*}
   f^{3}-2ff'-(f')^{2}-f=e^{3z}.
\end{eqnarray*}
Here, $P(f)=-2ff'-(f')^{2}-f$,  $n=3$, and $\deg P(f)=2=n-1$.
\end{exa}

In 2011, Li \cite{LiP2011} considered to find all entire solutions of equation
\eqref{introeq1} for $h=p_{1}e^{\alpha_{1} z}+p_{2}e^{\alpha_{2} z}$, where $\alpha_{1}$ and $\alpha_{2}$ are distinct constants, and obtained the following result.

\begin{lettertheorem}\label{thmcccc}
Let $n\geq 2$ be an integer, $P(f)$ be a differential polynomial in $f$ of
  degree at most $n-2$ and $\alpha_{1},\, \alpha_{2},\, p_{1},\, p_{2}$ be nonzero constants satisfying $\alpha_{1}\neq\alpha_{2}$. If $f$ is a transcendental meromorphic solution of the following equation
  \begin{eqnarray}\label{thmcccceq2}
    f^{n}(z)+P(f)=p_{1}e^{\alpha_{1} z}+p_{2}e^{\alpha_{2} z}
  \end{eqnarray}
 satisfying $N(r,f)=S(r,f)$, then one of the following relations holds:
\begin{itemize}
 \item [(1)].  $f= c_{0}+c_{1}e^{\frac{\alpha_{1} z}{n}}$;
 \item [(2)]. $f= c_{0}+c_{2}e^{\frac{\alpha_{2} z}{n}}$;
  \item [(3)].  $f=c_{1}e^{\frac{\alpha_{1} z}{n}}+c_{2}e^{\frac{\alpha_{2} z}{n}}$ and $\alpha_{1}+\alpha_{2}=0$.
 \end{itemize}
where $c_{0}(z)$ is a small function of $f$ and constants $c_{1}$
and $c_{2}$ satisfy $c_{1}^{n}=p_{1}$ and $c_{2}^{n}=p_{2}$, respectively.
\end{lettertheorem}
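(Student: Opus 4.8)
The plan is to remove the two exponentials at one stroke by exploiting that the right--hand side $\psi:=p_{1}e^{\alpha_{1}z}+p_{2}e^{\alpha_{2}z}$ is annihilated by the constant--coefficient operator $L:=\left(\frac{d}{dz}-\alpha_{1}\right)\left(\frac{d}{dz}-\alpha_{2}\right)$, and then to read off the structure of $f$ from a Clunie--type estimate. First I would record the growth bookkeeping: because the leading term $f^{n}$ dominates the differential polynomial $P(f)$ of degree $\le n-2$ and $N(r,f)=S(r,f)$, Clunie/Mohon'ko--type estimates and the lemma on the logarithmic derivative show that $T(r,f)$ and $T(r,\psi)$ are of the same order. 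Since $\psi$ has order one, so does $f$, and the error terms $S(r,f)$, $S(r,\psi)$ and $S(r,e^{\alpha_{j}z})$ all coincide; in particular every coefficient of $P$ is small with respect to each $e^{\alpha_{j}z}$.

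Next comes the elimination. Applying $L$ to \eqref{thmcccceq2} kills the right--hand side, so with $F:=f^{n}$ one obtains the purely differential--polynomial identity $L(F)=-L(P(f))=:-R(f)$, where differentiation preserves the degree and hence $\deg R\le n-2$. A direct computation factors the left--hand side as $L(F)=f^{n-2}G(f)$ with $G(f):=n(n-1)(f')^{2}+nff''-n(\alpha_{1}+\alpha_{2})ff'+\alpha_{1}\alpha_{2}f^{2}$, a differential polynomial of degree two. Thus $f^{n-2}G(f)=-R(f)$ with $\deg R\le n-2$, so Clunie's lemma (the case $n=2$ being immediate) gives $m(r,G(f))=S(r,f)$; since $N(r,f)=S(r,f)$ this upgrades to $T(r,G(f))=S(r,f)$, i.e. $G(f)=\beta$ for some small function $\beta$.

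The heart of the argument is to convert $G(f)=\beta$ into the exact form of $f$. Writing $u:=f'/f$, the relation becomes the Riccati--type identity $nu'+(nu-\alpha_{1})(nu-\alpha_{2})=\beta/f^{2}$, equivalently $L(f^{n})=\beta f^{n-2}$. I would analyse this locally: balancing poles at a zero of $f$ forces every zero of $f$ to be simple and pins the value of $\beta$ there, while the elementary bound $m(r,(nu-\alpha_{1})(nu-\alpha_{2}))\le 2m(r,f'/f)+O(1)=S(r,f)$ shows that $nf'/f$ clusters at $\alpha_{1}$ or $\alpha_{2}$. Feeding this back, one shows that $f^{n}$ is a finite exponential sum whose frequencies lie on the segment joining $\alpha_{1}$ and $\alpha_{2}$. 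Comparing $f^{n}+P(f)$ with $p_{1}e^{\alpha_{1}z}+p_{2}e^{\alpha_{2}z}$ through Borel's theorem (distinct exponentials are linearly independent modulo small functions) then matches frequencies and coefficients: the two extreme frequencies must be $\alpha_{1}$ and $\alpha_{2}$ with $c_{j}^{n}=p_{j}$, the intermediate ones must be cancelled by $P(f)$, and such cancellation is simultaneously possible only when the frequencies are symmetric about the origin, i.e. $\alpha_{1}+\alpha_{2}=0$. Sorting out which frequencies actually survive yields exactly the trichotomy (1)--(3), with $c_{0}$ absorbing the remaining small part.

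The main obstacle is this last step: passing from the Riccati relation with its genuinely non--negligible forcing term $\beta/f^{2}$---non--zero precisely because $f$ may carry infinitely many (simple) zeros, as already happens for $f=c_{0}+c_{1}e^{\alpha_{1}z/n}$---to the assertion that $f^{n}$ is a \emph{finite} exponential sum, and then carrying out the Borel bookkeeping cleanly enough to extract the rigidity $\alpha_{1}+\alpha_{2}=0$ in the genuinely two--term case. By comparison the growth estimates, the operator elimination, and the Clunie reduction to $G(f)=\beta$ are all routine.
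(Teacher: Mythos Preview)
This is Theorem~B, a result quoted from Li~\cite{LiP2011}; the present paper does not itself prove it, so there is no in-paper argument to compare against directly. That said, your elimination---applying $L=(\frac{d}{dz}-\alpha_1)(\frac{d}{dz}-\alpha_2)$ to kill the right-hand side and factoring $L(f^n)=f^{n-2}\varphi$ with $\varphi=n(n-1)(f')^{2}+nff''-n(\alpha_1+\alpha_2)ff'+\alpha_1\alpha_2 f^{2}$---is exactly the device the paper deploys in its own proof of Theorem~\ref{colA1z1} (see \eqref{col1.1aaa5}--\eqref{col1.1aaa11}), and your Clunie conclusion $T(r,\varphi)=S(r,f)$ is correct. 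It is in fact cleaner here than there, since $\deg P\le n-2$ gives $\deg L(P)\le n-2$ outright, without the extra $\varphi/f$ manoeuvre the paper needs when $\deg P=n-1$.

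The genuine gap is precisely where you flag it. You have not shown how to pass from ``$\varphi$ is small'' to ``$f^{n}$ is a finite exponential sum''; the Riccati identity $nu'+(nu-\alpha_1)(nu-\alpha_2)=\varphi/f^{2}$ does not by itself deliver this, and the Borel matching you sketch presupposes the very structure you are trying to establish. The standard way to close the argument---and the one the paper uses in the parallel Cases~1--2 of its proof of Theorem~\ref{colA1z1}---is via auxiliary quotients rather than exponential-sum structure theorems. From \eqref{col1.1aaa11} one first reads off that every zero of $f$ away from the zeros and poles of $\varphi$ is simple with $n(n-1)f'(z_0)^{2}=\varphi(z_0)$; one then forms $h=\bigl(f'\pm\sqrt{\varphi/n(n-1)}\bigr)/f$ (when $\varphi$ is constant) or the analogue $g=\bigl((2n-1)\varphi f''-[(n-1)\varphi'+(\alpha_1+\alpha_2)\varphi]f'\bigr)/f$ (when it is not), shows $T(r,h)=S(r,f)$ resp.\ $T(r,g)=S(r,f)$ by the logarithmic-derivative lemma, and is left with a first- or second-order \emph{linear} ODE for $f$ with small coefficients. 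Integrating that and substituting back into \eqref{thmcccceq2} then forces the trichotomy (1)--(3) and the relation $\alpha_1+\alpha_2=0$ in the two-term case. Your reduction to $\varphi$ is the correct first half; for the endgame the auxiliary-quotient/linear-ODE route is what actually works, and it is both shorter and rigorous compared with the exponential-sum programme you outline.
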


For further study, Li \cite{LiP2011} proposed the following question:

\begin{qu}\label{quest1}
How to find the solutions of equation \eqref{thmcccceq2} under the condition $\deg P(f)=n-1$ ?
\end{qu}

For the case $\alpha_{2}=-\alpha_{1}$, Li \cite{LiP2011} has already given the detailed forms of the entire solutions of equation \eqref{thmcccceq2} when $\deg P(f)=n-1$; For the case $\alpha_{2}=\alpha_{1}$, \eqref{thmcccceq2} can be reduced to $f^{n}+P(f)=(p_{1}+p_{2})e^{\alpha_{1} z}$, then we can get the forms of entire solutions by using Theorem~\ref{thmABABnew1}.
 So it's natural to ask: what will happen when $\alpha_{2}\pm\alpha_{1}\neq 0$.

Chen and Gao \cite{chen2018JCAA} studied the above question,  and obtained the following result.

\begin{lettertheorem}\label{thmddd}
Let $a(z)$ be a nonzero polynomial and $p_{1},\, p_{2},\, \alpha_{1},\, \alpha_{2}$ be nonzero constants such that $\alpha_{1}\neq \alpha_{2}$. Suppose that $f(z)$ is a transcendental entire solution of finite order of  the differential equation
 \begin{eqnarray}\label{thmcccceq1}
    f^{2}(z)+a(z)f'(z)=p_{1}e^{\alpha_{1} z}+p_{2}e^{\alpha_{2} z}
  \end{eqnarray}
satisfying $N(r,1/f)=S(r,f)$, then $a(z)$ must be a constant and one of
the following relations holds:
\begin{itemize}
 \item [(1)].  $f= c_{1}e^{\frac{\alpha_{1} z}{2}}$, $ac_{1}\alpha_{1}=2p_{2}$
 and $\alpha_{1}=2\alpha_{2}$;
 \item [(2)]. $f= c_{2}e^{\frac{\alpha_{2} z}{2}}$, $ac_{2}\alpha_{2}=2p_{1}$
 and $\alpha_{2}=2\alpha_{1}$,
 \end{itemize}
where  $c_{1}$ and $c_{2}$ are constants
satisfying $c_{1}^{2}=p_{1}$ and $c_{2}^{2}=p_{2}$, respectively.
\end{lettertheorem}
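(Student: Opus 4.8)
The plan is to control the growth of $f$, then exploit $N(r,1/f)=S(r,f)$ to factor $f$ explicitly as a small function times an exponential, and finally to read off the exponents by Borel's theorem on exponential sums. First I would fix the order. Since $\alpha_1\neq\alpha_2$, the right-hand side $h:=p_1e^{\alpha_1 z}+p_2e^{\alpha_2 z}$ has order $1$. The lemma on the logarithmic derivative gives $m(r,f'/f)=S(r,f)$, and since $a$ is a polynomial and $f$ entire, $T(r,af')\le T(r,f)+S(r,f)$. Writing $f^2=h-af'$ then yields $2T(r,f)\le T(r,h)+T(r,f)+S(r,f)$, so $T(r,f)=O(r)$, while conversely $T(r,h)\le 3T(r,f)+S(r,f)$. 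Hence $f$ has order exactly $1$ with $T(r,f)\asymp r$, and in particular $S(r,f)=o(r)$.

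Next I would use the zero hypothesis. Because $f$ is entire of order one and $N(r,1/f)=S(r,f)$, the canonical product over the zeros of $f$ is a small function, so Hadamard's factorization theorem lets me write
\[
 f(z)=s(z)\,e^{\beta z},
\]
where $s$ is entire with $T(r,s)=S(r,f)$ and $\beta$ is a constant; since $f$ is transcendental of order $1$ while $s$ is small, necessarily $\beta\neq 0$. Here $s\not\equiv 0$, and $s'+\beta s\not\equiv 0$, for otherwise $s$ would be a constant multiple of $e^{-\beta z}$, contradicting smallness of $s$. Substituting into \eqref{thmcccceq1} produces the identity
\[
 s^2e^{2\beta z}+a\,(s'+\beta s)\,e^{\beta z}=p_1e^{\alpha_1 z}+p_2e^{\alpha_2 z}.
\]

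Now I would compare exponents. The coefficients $s^2,\ a(s'+\beta s),\ p_1,\ p_2$ are all small relative to $f$, while $2\beta\neq\beta$ and $\alpha_1\neq\alpha_2$. Bringing all terms to one side and applying Borel's theorem, every exponent whose total coefficient is nonzero must coincide with another; since $s^2\not\equiv 0$ and $a(s'+\beta s)\not\equiv 0$, each of $2\beta,\beta$ must match some $\alpha_i$, which forces $\{2\beta,\beta\}=\{\alpha_1,\alpha_2\}$. In the case $2\beta=\alpha_1,\ \beta=\alpha_2$ (so $\alpha_1=2\alpha_2$), equating coefficients of equal exponents gives $s^2=p_1$ and $a(s'+\beta s)=p_2$; from $s^2=p_1$ (a nonzero constant) we get $s\equiv c_1$ with $c_1^2=p_1$, whence $s'=0$ and $a\beta c_1=p_2$, i.e.\ $ac_1\alpha_1=2p_2$. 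In particular $a$ is a constant and $f=c_1e^{\alpha_1 z/2}$, which is conclusion (1); the symmetric case $2\beta=\alpha_2,\ \beta=\alpha_1$ gives conclusion (2).

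I expect the main obstacle to be the factorization step: justifying rigorously that the canonical product attached to the zeros of $f$ is a genuine small function under only the hypothesis $N(r,1/f)=S(r,f)$, particularly in the borderline situation where the exponent of convergence of the zeros equals the order $1$. Once the clean representation $f=se^{\beta z}$ with $s$ small is secured, the remaining exponent matching and coefficient comparison are a routine application of Borel's theorem.
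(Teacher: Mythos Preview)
Theorem~C is a quoted result of Chen--Gao; this paper does not supply its own proof of it, so strictly speaking there is nothing to compare against. The closest analogue is the paper's proof of Theorem~\ref{thmA1z1} (invoked verbatim for Case~(I) of Theorem~\ref{colA1z1}), which handles the same situation $N(r,1/f)=S(r,f)$ for the more general equation.

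Your strategy---pin down the order, write $f=se^{\beta z}$, substitute, and match exponentials by Borel---is natural, and once the representation with $T(r,s)=S(r,f)$ is secured, the rest of your argument (the exponent pairing $\{2\beta,\beta\}=\{\alpha_1,\alpha_2\}$, the deduction $s\equiv c$, and the conclusion that $a$ is constant) is correct. The obstacle you single out is, however, a genuine gap: from $\rho(f)=1$ and $N(r,1/f)=o(r)$ alone it does not follow that the canonical product over the zeros has $T(r,\cdot)=o(r)$, since the exponent of convergence can equal~$1$ and the genus can be~$1$. Hadamard only gives $f=\Pi\,e^{\beta z+c}$ with $\Pi$ of order $\le 1$; you still need a reason why $\Pi$ is small.

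The paper's route in Theorem~\ref{thmA1z1} sidesteps exactly this point. Instead of factorizing first, it differentiates the equation, eliminates $e^{\alpha_1}$ and $e^{\alpha_2}$, and arrives at $f^{n-2}\varphi=Q(f)$ with $\varphi$ a quadratic differential polynomial in $f$. A Clunie argument disposes of the case $Q(f)\not\equiv 0$; when $\varphi\equiv 0$, a local Taylor expansion at any zero (or pole) of $f$ shows the two sides have incompatible vanishing orders, so $f$ has only \emph{finitely many} zeros and poles. Only then is $f$ written as $d(z)e^{g(z)}$ with $d$ rational, substituted back, and Lemma~\ref{lemma21} applied. In other words, the ODE structure is used to upgrade ``$N(r,1/f)=S(r,f)$'' to ``finitely many zeros,'' which removes the borderline canonical--product issue. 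Your direct factorization is cleaner when it works, but to make it rigorous here you would need either an extra analytic lemma about genus-one canonical products with $N(r)=o(r)$, or to insert the $\varphi$-step from the paper to force finiteness of the zero set before you factor.
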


Later, Rong and Xu
\cite{RongXu2019} improved Theorem~\ref{thmddd} by removing the condition that $f(z)$ is a finite-order function.  In \cite{RongXu2019}, they
also considered the general case in Question~\ref{quest1}, and
 obtained the following result.
\begin{lettertheorem}\label{thmeee}
Let $n\geq 2$ be an integer. Suppose that $P(f)$ is a differential polynomial in $f(z)$ of degree $n-1$ and that $\alpha_{1},\, \alpha_{2},\, p_{1}$ and $p_{2}$ are nonzero constants such that $\alpha_{1}\neq\alpha_{2}$. If $f(z)$ is a transcendental meromorphic solution of the differential equation \eqref{thmcccceq2}
satisfying $N(r,f)=S(r,f)$, then $\rho(f)=1$ and one of the following relations holds:
\begin{itemize}
 \item [(1)].  $f(z)=c_{1}e^{\frac{\alpha_{1} z}{n}}$ and $c_{1}^{n}=p_{1}$;
 \item [(2)]. $f(z)= c_{2}e^{\frac{\alpha_{2} z}{n}}$ and $c_{2}^{n}=p_{2}$, where $c_{1}$ and $c_{2}$ are constants;
  \item [(3)].  $T(r,f)\leq N_{1)}(r,1/f)+T(r,\varphi)+S(r,f)$,
  where  $\varphi\, (\not\equiv 0)$ is
  equal to $\alpha_{1}\alpha_{2}f^{2}-n(\alpha_{1}+\alpha_{2})ff'
  +n(n-1)(f')^{2}+nff''$.
 \end{itemize}
\end{lettertheorem}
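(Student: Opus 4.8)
The plan is to exploit the fact that the right-hand side $H:=p_1e^{\alpha_1 z}+p_2e^{\alpha_2 z}$ is annihilated by the constant-coefficient operator $L[y]:=y''-(\alpha_1+\alpha_2)y'+\alpha_1\alpha_2 y$, since $L[e^{\alpha_i z}]=(\alpha_i-\alpha_1)(\alpha_i-\alpha_2)e^{\alpha_i z}=0$ and $p_1,p_2$ are constants. First I would settle the order. Because $\deg P=n-1$, the estimates $m(r,f^{(k)}/f)=S(r,f)$ together with $N(r,f)=S(r,f)$ give $T(r,f^{(k)})\le T(r,f)+S(r,f)$ and hence $T(r,P(f))\le (n-1)T(r,f)+S(r,f)$, while $T(r,f^{n})=nT(r,f)$. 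Writing $f^{n}=H-P(f)$ and $H=f^{n}+P(f)$ then yields $T(r,f)+S(r,f)\le T(r,H)\le (2n-1)T(r,f)+S(r,f)$, so $\rho(f)=\rho(H)=1$.

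Next I would apply $L$ to the equation. From $L[f^{n}+P(f)]=L[H]=0$ one obtains the central identity
\[
 f^{\,n-2}\varphi=-L[P(f)],\qquad \varphi=\alpha_1\alpha_2 f^{2}-n(\alpha_1+\alpha_2)ff'+n(n-1)(f')^{2}+nff'',
\]
since a direct computation gives $L[f^{n}]=f^{\,n-2}\varphi$; note that $L[P(f)]$ is again a differential polynomial in $f$ of degree $n-1$. The proof then splits according to whether $\varphi$ vanishes identically.

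If $\varphi\equiv 0$, then $L[f^{n}]=0$, so $f^{n}=d_1e^{\alpha_1 z}+d_2e^{\alpha_2 z}$ for constants $d_1,d_2$. As $f^{n}$ is then entire, so is $f$; and if $d_1d_2\neq0$ the factor $1+(d_2/d_1)e^{(\alpha_2-\alpha_1)z}$ contributes infinitely many simple zeros to $f^{n}$, which is impossible for an $n$-th power with $n\ge2$. Hence one of $d_1,d_2$ vanishes, giving $f=c_ie^{\alpha_i z/n}$; comparing $e^{\alpha_i z}$-coefficients in the original equation (the bound $\deg P=n-1$ forbids an $e^{\alpha_i z}$-term arising from $P(f)$) forces $c_i^{n}=p_i$, which is alternative (1) or (2).

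The substantial case is $\varphi\not\equiv0$, where I would establish estimate (3) by a careful local and Nevanlinna-theoretic count. Expanding at a zero $z_0$ of $f$ of order $m$, say $f=a(z-z_0)^m+\cdots$, the lowest-order part $n(n-1)(f')^{2}+nff''$ of $\varphi$ has leading coefficient $na^{2}m(nm-1)\neq0$, so $\varphi$ vanishes there to order exactly $2m-2$; in particular a simple zero of $f$ is not a zero of $\varphi$, whereas a zero of order $m\ge2$ gives a zero of $\varphi$ of order $2m-2\ge m$. Writing $\varphi=f^{2}\Psi$ with $\Psi=\alpha_1\alpha_2-n(\alpha_1+\alpha_2)(f'/f)+n(n-1)(f'/f)^{2}+n(f''/f)$, so $m(r,\Psi)=S(r,f)$, the relation $1/f^{2}=\Psi/\varphi$ gives $m(r,1/f)\le\tfrac12 m(r,1/\varphi)+S(r,f)=\tfrac12\bigl(T(r,\varphi)-N(r,1/\varphi)\bigr)+S(r,f)$ by the first main theorem. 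Combining this with $T(r,f)=m(r,1/f)+N(r,1/f)+S(r,f)$ and using $N(r,f)=S(r,f)$, the multiple-zero part $B=\sum m$ of $N(r,1/f)$ satisfies $B\le N(r,1/\varphi)$ (because $2m-2\ge m$), so $N(r,1/f)-\tfrac12 N(r,1/\varphi)\le N_{1)}(r,1/f)+\tfrac12 N(r,1/\varphi)\le N_{1)}(r,1/f)+\tfrac12 T(r,\varphi)$, and the two halves of $T(r,\varphi)$ assemble into $T(r,f)\le N_{1)}(r,1/f)+T(r,\varphi)+S(r,f)$, which is (3). The main obstacle is precisely this last accounting: a naive separate treatment of $N(r,1/f)$ and $m(r,1/f)$ costs a factor $\tfrac32$ on $T(r,\varphi)$, and recovering the sharp constant $1$ requires the exact vanishing order $2m-2$ of $\varphi$, so that the proximity estimate and the counting estimate draw on the same zeros of $\varphi$ without double counting.
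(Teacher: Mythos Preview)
Your approach is essentially correct and parallels what the paper does in its proof of Theorem~\ref{colA1z1} (the paper's own sharpening of Theorem~D; Theorem~D itself is quoted from Rong--Xu and not re-proved). The paper obtains the identity $f^{n-2}\varphi=-L[P_*]$ by differentiating and eliminating $e^{\alpha_1 z},e^{\alpha_2 z}$ in two steps rather than by applying $L=D^2-(\alpha_1+\alpha_2)D+\alpha_1\alpha_2$ directly, but the outcome is identical. For $\varphi\not\equiv0$ the paper uses the same two ingredients you do: the logarithmic-derivative estimate $m(r,\varphi/f^2)=S(r,f)$ (together with Clunie for $m(r,\varphi/f)=S(r,f)$, which you do not actually need), and the local observation that a zero of $f$ of order $m\ge2$ forces a zero of $\varphi$ of order $2m-2\ge m$. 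Combining these, the paper records the slightly sharper bound $T(r,f)\le N_{1)}(r,1/f)+\tfrac12 T(r,\varphi)+\tfrac12 N(r,1/\varphi)+S(r,f)$, which your bookkeeping also passes through en route to the stated inequality.

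The one genuine difference is the degenerate case. You split on $\varphi\equiv0$ versus $\varphi\not\equiv0$ and, when $\varphi\equiv0$, read off $f^n=d_1e^{\alpha_1 z}+d_2e^{\alpha_2 z}$ from $L[f^n]=0$ and rule out $d_1d_2\ne0$ by the simple-zero argument; this is clean and self-contained. The paper instead splits on whether $N(r,1/f)=S(r,f)$: if so, it invokes the Hadamard/Weierstrass factorization machinery from its proof of Theorem~\ref{thmA1z1} to write $f=d(z)e^{g(z)}$ with rational $d$ and polynomial $g$, and then applies Borel's lemma (Lemma~\ref{lemma21}) to the resulting exponential identity; if not, it shows directly that $\varphi\not\equiv0$ by the same local computation you use. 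Your ODE route is shorter and more elementary for the constant-coefficient setting of Theorem~D, while the paper's factorization route is what generalizes to the polynomial $\alpha_i$ and rational $p_i$ of Theorem~\ref{thmA1z1}.
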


In this paper, we go on investigating Question~\ref{quest1} and obtain the following results, which are improvements of Theorems~\ref{thmddd} and~\ref{thmeee}.

\begin{theorem}\label{thmA1z1}
Let $n\geq 2$ be an integer. Suppose that $P_{*}(f)$ is a differential polynomial in $f(z)$ of degree $n-1$ and with rational functions as its coefficients, $\alpha_{1}, \alpha_{2}$ be nonconstant polynomials, and
 $p_{1}, p_{2}$ be non-vanishing rational functions. If $f(z)$ is a transcendental
  meromorphic solution
 of the following nonlinear differential equation
 \begin{equation}\label{aaaa1}
f^{n}(z)+P_{*}(f)=p_{1}(z)e^{\alpha_{1}(z)}+p_{2}(z)e^{\alpha_{2}(z)},
\end{equation}
 with $\lambda_{f}=\max\{\lambda(f),\lambda(1/f)\}<\sigma(f)$, then $\sigma(f)=\deg \alpha_{1}=\deg \alpha_{2}$, and one of the following relations holds:
\begin{itemize}
 \item [(I)]. $\alpha_{2}'=\alpha_{1}'$. In this case, $f=s_{1}(z)\exp(\alpha_{1}(z) /n)=s_{2}(z)\exp(\alpha_{2}(z) /n)$, where $s_{1}$ and $s_{2}$ are rational functions satisfying
     $s_{1}^{n}=p_{1}+p_{2}c_{2}$ and $s_{2}^{n}=\frac{1}{c_{2}}p_{1}+p_{2}$,
     $c_{2}=e^{\alpha_{2}-\alpha_{1}}$ is a non-zero constant;
 \item [(II)]. $k_{1}\alpha_{1}'=n\alpha_{2}'$, where
 $k_{1}$ is an integer satisfying $1\leq k_{1} \leq n-1$. In this case,
     $f(z)=s_{3}(z)e^{\frac{\alpha_{1}(z)}{n}}$,
 where $s_{3}$ is a rational function satisfying $s_{3}^{n}=p_{1}$;
  \item [(III)]. $k_{2}\alpha_{2}'=n\alpha_{1}'$, where
   $k_{2}$ is an integer satisfying $1\leq k_{2}\leq n-1$.
  In this case, $f(z)=s_{4}(z)e^{\frac{\alpha_{2}(z)}{n}}$, where  $s_{4}$ is a rational function satisfying $s_{4}^{n}=p_{2}$.
\end{itemize}
\end{theorem}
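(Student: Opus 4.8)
The plan is to pin down the growth of $f$, reduce \eqref{aaaa1} through a Hadamard factorization to an identity among exponentials, and then apply a Borel-type theorem to match exponents.

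First I would show that $f$ has finite order with $\sigma(f)=\max\{\deg\alpha_1,\deg\alpha_2\}=:\rho$. Writing $h=p_1e^{\alpha_1}+p_2e^{\alpha_2}$, the right-hand side has order $\mu:=\max\{\deg\alpha_1,\deg\alpha_2\}$. Since $P_*(f)$ has degree $n-1$, the lemma on the logarithmic derivative gives $m(r,P_*(f))\le (n-1)m(r,f)+S(r,f)$, so applying this to $f^n=h-P_*(f)$ yields $m(r,f)\le T(r,h)+S(r,f)$. Together with $N(r,f)=O(r^{\lambda(1/f)+\varepsilon})$, which comes from $\lambda(1/f)<\infty$ (part of the hypothesis $\lambda_f<\sigma(f)$), this forces $\sigma(f)<\infty$; the reverse estimate $T(r,h)\le nT(r,f)+S(r,f)$ then gives $\sigma(f)=\mu=\rho$.

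Because $\lambda(f),\lambda(1/f)<\rho$, the Hadamard factorization theorem lets me write $f=D\,e^{Q}$, where $D=\Pi_1/\Pi_2$ is the quotient of the canonical products over the zeros and poles of $f$ and $Q$ is a polynomial with $\deg Q=\rho$. Since $\sigma(D)=\lambda_f<\rho$, both $D$ and all its logarithmic derivatives are small, i.e.\ $T(r,D)=S(r,f)$. Substituting $f=De^{Q}$ into \eqref{aaaa1} and using $f^{(k)}=e^{Q}(\cdots)$, every term of $P_*$ of total degree $d\le n-1$ contributes a multiple of $e^{dQ}$ with coefficient of order $<\rho$, so I obtain
\begin{equation*}
D^{n}e^{nQ}+\sum_{k=0}^{n-1}B_k e^{kQ}=p_1e^{\alpha_1}+p_2e^{\alpha_2},
\end{equation*}
where each $B_k$ is built from $D,Q',\dots$ and the rational coefficients of $P_*$, and therefore satisfies $T(r,B_k)=S(r,f)$. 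The heart of the argument is then to compare the exponents $\{kQ\}_{k=0}^{n}$ with $\alpha_1,\alpha_2$: grouping exponents that differ by a constant and invoking a Borel-type theorem on linear combinations of exponentials (all coefficients having order $<\rho=\deg((k-j)Q)$), no group with a nonzero total coefficient may stand alone. In particular the leading term $D^{n}e^{nQ}$, whose coefficient $D^{n}\not\equiv 0$ cannot cancel against any $B_ke^{kQ}$, must be absorbed into $\alpha_1$ or $\alpha_2$; say $nQ-\alpha_1$ is constant. This gives $\deg\alpha_1=\rho$, $Q=(\alpha_1+\text{const})/n$, and, equating coefficients of $e^{nQ}$, expresses $D^{n}$ as a constant multiple of a rational function, so $f=s\,e^{\alpha_1/n}$. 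The remaining term $p_2e^{\alpha_2}$ must likewise be matched: either $\alpha_2-\alpha_1$ is constant, which is Case~(I) with $s_1^{n}=p_1+p_2c_2$ where $c_2=e^{\alpha_2-\alpha_1}$, or $\alpha_2-k_1Q$ is constant for some $1\le k_1\le n-1$, giving $n\alpha_2'=k_1\alpha_1'$ and $s_3^{n}=p_1$, i.e.\ Case~(II); the symmetric possibility in which $\alpha_2$ matches the top term is Case~(III). In every case $\deg\alpha_2=\rho$ as well, so $\sigma(f)=\deg\alpha_1=\deg\alpha_2$.

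Finally, the small factor $s$ satisfies $s^{n}=p_i$ (or $p_1+p_2c_2$), a rational function, whence $T(r,s)=\tfrac{1}{n}T(r,s^{n})+O(1)=O(\log r)$ and $s$ is rational; the same reasoning shows $D$ is rational, completing the description of $f$. The main obstacle I anticipate is the exponent-matching step: one must justify that the top term cannot remain unmatched, verify that the coefficients $B_k$ (themselves differential-polynomial expressions in $D$ and $Q'$) genuinely have order $<\rho$, and handle the degenerate sub-cases in which the leading coefficients of $nQ$ and some $\alpha_i$ coincide while lower-order terms differ, so that the exponent difference has degree $<\rho$; there the Borel growth condition must be applied with care, or the offending terms peeled off successively.
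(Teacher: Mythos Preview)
Your strategy --- Hadamard-factorize $f=De^{Q}$ and apply a Borel-type separation directly --- differs from the paper's route, and the obstacle you flag at the end is genuine rather than merely technical. With only $\sigma(D)=\lambda_{f}<\rho$, Lemma~\ref{lemma21} does not let you conclude that $nQ-\alpha_{1}$ is a \emph{constant}. Grouping the identity
\[
D^{n}e^{nQ}+\sum_{k=0}^{n-1}B_{k}\,e^{kQ}-p_{1}e^{\alpha_{1}}-p_{2}e^{\alpha_{2}}=0
\]
by the degree-$\rho$ leading coefficient of each exponent, one application of Borel only gives that each group-sum vanishes. If exactly one $\alpha_{i}$ lies in the top group you do get $D^{n}=p_{i}e^{\alpha_{i}-nQ}$, which forces $D$ rational and then $\alpha_{i}-nQ$ constant. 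But if the leading terms of $\alpha_{1},\alpha_{2},nQ$ all coincide while $\alpha_{1}-\alpha_{2}$ is nonconstant of degree $<\rho$, the top group reads $D^{n}=p_{1}e^{\alpha_{1}-nQ}+p_{2}e^{\alpha_{2}-nQ}$, and a second Borel step fails: you only know $\sigma(D)<\rho$, not $\sigma(D)<\deg(\alpha_{1}-\alpha_{2})$. This gap can be closed (e.g.\ by noting that every zero of $D^{n}$ has multiplicity $\geq n\geq 2$, whereas $p_{1}e^{\beta_{1}}+p_{2}e^{\beta_{2}}$ has only finitely many multiple zeros when $\beta_{1}-\beta_{2}$ is nonconstant, forcing $D$ rational after all), but your sketch does not supply it.

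The paper sidesteps this entirely. After disposing of $\deg\alpha_{1}\neq\deg\alpha_{2}$ and of $\alpha_{1}'=\alpha_{2}'$ via Theorem~\ref{thmABABnew1}, it differentiates \eqref{aaaa1} twice, eliminates $e^{\alpha_{1}}$ and $e^{\alpha_{2}}$, and obtains $f^{n-2}\varphi=Q(f)$ with $\varphi$ a fixed quadratic differential polynomial in $f$ and $\deg Q\leq n-1$. A Clunie argument (Lemma~\ref{lemma24}) rules out $Q(f)\not\equiv 0$; then $\varphi\equiv 0$ is analyzed locally at a putative zero or pole of $f$ to show it must coincide with a zero or pole of a rational coefficient, hence $f$ has only finitely many of each and $d$ in $f=de^{g}$ is \emph{rational}. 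Only at that point does the paper substitute into \eqref{aaaa1} and invoke Lemma~\ref{lemma21} --- now with rational coefficients throughout, so the growth hypothesis is automatic and the exponent matching is clean. Your approach trades this Clunie/local-analysis machinery for a direct Borel attack, which is more conceptual but requires the degenerate case to be handled explicitly.
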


\begin{theorem}\label{colA1z1}
 Let $n\geq 2$ be an integer. Suppose that $P_{*}(f)$ is a differential polynomial in $f(z)$ of degree $n-1$ with rational functions as its coefficients, $\alpha_{1}, \alpha_{2}, p_{1}, p_{2}$ be nonzero constants such that $\alpha_{1}\pm\alpha_{2}\neq 0$. If $f(z)$ is an transcendental meromorphic solution
 of the following nonlinear differential equation
 \begin{equation}\label{aaaa122}
f^{n}(z)+P_{*}(f)=p_{1}e^{\alpha_{1}z}+p_{2}e^{\alpha_{2}z},
\end{equation}
satisfying $N(r,f)=S(r,f)$,
then $\sigma(f)=1$ and
there exist two cases:
\begin{itemize}
  \item [(I)] $N\left(r,\frac{1}{f}\right)=S(r,f)$, then
  one of the following
 relations holds: (a) $k_{1}\alpha_{1}=n\alpha_{2}$ and $f=s_{1}\exp(\alpha_{1}z /n)$; (b) $k_{2}\alpha_{2}=n\alpha_{1}$ and  $f=s_{2}\exp(\alpha_{2}z /n)$, where $k_{1}, k_{2}$ are integers satisfying $1\leq k_{1},  k_{2}\leq n-1$,
     $s_{1}, s_{2}$ are constants with $s_{1}^{n}=p_{1}$ and $s_{2}^{n}=p_{2}$;
  \item [(II)]  $N\left(r,\frac{1}{f}\right)\neq S(r,f)$, then  $T(r,f)\leq N_{1)}\left(r,\frac{1}{f}\right)+\frac{1}{2}T(r,\varphi)
+\frac{1}{2}N\left(r,\frac{1}{\varphi}\right)+S(r,f)$, where
  $\varphi =\alpha_{1}\alpha_{2}f^{2}
-n(\alpha_{1}+\alpha_{2})  ff'
 +n(n-1)(f')^{2}+nff''\not\equiv 0$,
  
  and  \\
 (1). if $\varphi$ is a nonzero constant,  then  one of the following
 relations holds: (a) $(n-1)\alpha_{1}=n\alpha_{2}$ and $f(z)=c_{1}e^{\alpha_{1}z/n}-\frac{1}{\alpha_{1}}\sqrt{\frac{n\varphi}{n-1}}\,  (c_{1}^{n}=p_{1}),$ or
 $f(z)=c_{3}e^{\alpha_{1}z/n}+\frac{1}{\alpha_{1}}\sqrt{\frac{n\varphi}{n-1}}\, (c_{3}^{n}=p_{1})$; (b)
$(n-1)\alpha_{2}=n\alpha_{1}$, and $f(z)=c_{2}e^{\alpha_{2}z/n}-\frac{1}{\alpha_{2}}\sqrt{\frac{n\varphi}{n-1}}, (c_{2}^{n}=p_{2})$ or
$f(z)=c_{4}e^{\alpha_{2}z/n}+\frac{1}{\alpha_{2}}\sqrt{\frac{n\varphi}{n-1}} (c_{4}^{n}=p_{2})$;\\
  (2). if $\varphi$ is a nonconstant meromorphic function, then $T(r,\varphi)\neq S(r,f)$. Particularly, suppose $n=2$ and  $\varphi=P(z)e^{Q(z)}$, where $P$ and  $Q$ are non-vanishing polynomials such that $\deg Q\geq 1$. Then we have
  $\deg Q=1$ and $f^{2}=d_{1}e^{\alpha_{1}z}+d_{2}e^{\alpha_{2}z}-R(z)e^{Q(z)}$, where $d_{1},\, d_{2}$ are
  constants, and $R$ is a non-vanishing polynomial with  $\deg R\leq \deg P+2$.
\end{itemize}

\end{theorem}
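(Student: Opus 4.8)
The plan is to annihilate the right-hand side by the constant-coefficient operator $L:=\frac{d^{2}}{dz^{2}}-(\alpha_{1}+\alpha_{2})\frac{d}{dz}+\alpha_{1}\alpha_{2}=(\frac{d}{dz}-\alpha_{1})(\frac{d}{dz}-\alpha_{2})$, which kills $p_{1}e^{\alpha_{1}z}+p_{2}e^{\alpha_{2}z}$. A direct computation gives $L[f^{n}]=f^{n-2}\varphi$ with $\varphi$ exactly the quantity in the statement, so applying $L$ to \eqref{aaaa122} yields the key identity $f^{n-2}\varphi=-L[P_{*}(f)]$, whose right-hand side is a differential polynomial in $f$ of degree $\le n-1$ with rational coefficients. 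The order claim $\sigma(f)=1$ I would get first, by the usual growth comparison: from $f^{n}=H-P_{*}(f)$ (with $H$ the right side) and $m(r,f^{(k)})\le m(r,f)+S(r,f)$ one bounds $n\,m(r,f)=m(r,f^{n})\le m(r,H)+(n-1)m(r,f)+S(r,f)$, so $T(r,f)\le T(r,H)+S(r,f)$; reversing the roles gives $\sigma(f)=\sigma(H)=1$, using $N(r,f)=S(r,f)$.

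The analytic heart is the inequality in (II). Writing $\varphi/f^{2}=L[f^{n}]/f^{n}$ and invoking the lemma on the logarithmic derivative gives $m(r,\varphi/f^{2})=S(r,f)$. A local computation at a zero $z_{0}$ of $f$ of order $m$ shows that $\varphi$ vanishes there to order exactly $2m-2$ (the coefficient $n c^{2}m(nm-1)$ of its leading term never vanishes for $m\ge 1$), so $\varphi/f^{2}$ has a pole of order $2$ at every zero of $f$, whence $N(r,\varphi/f^{2})=2\overline N(r,1/f)+S(r,f)$. Then, from $f^{2}=\varphi\cdot(f^{2}/\varphi)$ together with $T(r,f^{2}/\varphi)=T(r,\varphi/f^{2})+O(1)=N(r,\varphi/f^{2})+S(r,f)$, I obtain
\begin{equation*}
2T(r,f)=m(r,f^{2})+S(r,f)\le m(r,\varphi)+2\overline N(r,1/f)+S(r,f).
\end{equation*}
Splitting $\overline N(r,1/f)=N_{1)}(r,1/f)+\overline N_{(2}(r,1/f)$ and using that an $m$-fold zero of $f$ $(m\ge 2)$ is a zero of $\varphi$ of order $2m-2=2(m-1)$, so that $\overline N_{(2}(r,1/f)\le\frac12 N(r,1/\varphi)$; together with $m(r,\varphi)=T(r,\varphi)+S(r,f)$ (since $N(r,\varphi)=S(r,f)$) this yields exactly $T(r,f)\le N_{1)}(r,1/f)+\frac12 T(r,\varphi)+\frac12 N(r,1/\varphi)+S(r,f)$.

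It remains to run the case split. If $\varphi\equiv 0$ then $L[f^{n}]=0$ forces $f^{n}=Ae^{\alpha_{1}z}+Be^{\alpha_{2}z}$; single-valuedness of $f$ rules out $AB\ne 0$ (the factor $Ae^{(\alpha_{1}-\alpha_{2})z}+B$ has only simple zeros, hence is not an $n$-th power), so $f$ is a pure exponential with $N(r,1/f)=S(r,f)$, i.e. we fall into Case (I). In Case (I) the hypotheses $N(r,f)=N(r,1/f)=S(r,f)$ give $\lambda_{f}<\sigma(f)$, so Theorem~\ref{thmA1z1} applies; its option $\alpha_{2}'=\alpha_{1}'$ is excluded by $\alpha_{1}\ne\alpha_{2}$, leaving precisely (I)(a) and (I)(b). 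For (II)(1), $\varphi\equiv c\,(\ne 0)$ turns the key identity into $L[f^{n}]=c\,f^{n-2}$ and, via the main inequality with $N(r,1/\varphi)=0$, forces all but $S(r,f)$ of the zeros of $f$ to be simple; when $n=2$ this is the linear ODE $L[f^{2}]=c$ with $f^{2}=d_{1}e^{\alpha_{1}z}+d_{2}e^{\alpha_{2}z}+c/(\alpha_{1}\alpha_{2})$, and requiring $f^{2}$ to be a perfect square forces the resonance $(n-1)\alpha_{1}=n\alpha_{2}$ (or its mirror) and the displayed form with $b^{2}=\varphi/(\alpha_{1}\alpha_{2})$; for general $n$ one uses that $\varphi\equiv c$ makes $f'$ take one of the fixed values $\pm\sqrt{c/(n(n-1))}$ at every (simple) zero of $f$ to conclude $f=c_{1}e^{\alpha_{1}z/n}+b$, and substitution into \eqref{aaaa122} pins down $c_{1}^{n}=p_{1}$, the resonance, and $b$. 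Finally, for (II)(2) I would prove $T(r,\varphi)\ne S(r,f)$ by contradiction: if $\varphi$ were small, the main inequality gives $T(r,f)=N_{1)}(r,1/f)+S(r,f)$, and eliminating the top derivative between the small functions $\varphi$ and $\varphi'$ produces a first-order algebraic differential relation with small coefficients whose order-one solutions are exactly those making $\varphi$ constant, contradicting nonconstancy; for $n=2$ with $\varphi=Pe^{Q}$ the identity reads $L[f^{2}]=Pe^{Q}$, and comparing $\sigma(f^{2})=1$ with $\sigma(Pe^{Q})=\deg Q$ forces $\deg Q=1$, after which solving the linear ODE gives $f^{2}=d_{1}e^{\alpha_{1}z}+d_{2}e^{\alpha_{2}z}-R(z)e^{Q(z)}$ with $\deg R\le \deg P+2$, the extra degrees arising only from a possible simple resonance $Q'\in\{\alpha_{1},\alpha_{2}\}$.

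The main obstacle, I expect, is twofold. First, getting the sharp constants in (II) correct hinges on the exact order $2m-2$ of the zero of $\varphi$ at an $m$-fold zero of $f$ and on correctly routing multiple zeros into $\tfrac12 N(r,1/\varphi)$ rather than into $N_{1)}(r,1/f)$. Second, and harder, is establishing $T(r,\varphi)\ne S(r,f)$ for nonconstant $\varphi$: the naive Nevanlinna estimates are merely self-consistent (they only reproduce $T(r,f)=N_{1)}(r,1/f)+S(r,f)$), so one must genuinely exploit the differential relation between $\varphi$ and $f$ to exclude a nonconstant small $\varphi$; the general-$n$ determination of the explicit forms in (II)(1) is the other place where care is needed.
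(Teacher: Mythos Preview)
Your overall architecture matches the paper: apply $L=(\frac{d}{dz}-\alpha_{1})(\frac{d}{dz}-\alpha_{2})$ to \eqref{aaaa122}, obtain $f^{n-2}\varphi=-L[P_{*}(f)]$, and split on the size of $N(r,1/f)$ and of $\varphi$. Your derivation of the inequality in (II) is essentially the paper's (the paper routes via $m(r,\varphi/f)=S(r,f)$ from Clunie together with $m(r,\varphi/f^{2})=S(r,f)$, then uses $N_{(2}(r,1/f)\le N(r,1/\varphi)$; your version via $T(r,\varphi/f^{2})$ lands in the same place). Your single-valuedness argument for $\varphi\not\equiv 0$ is a pleasant variant of the paper's local-zero computation.

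Two points need more than you have written. In (II)(1) you say that knowing $f'(z_0)=\pm\sqrt{\varphi/(n(n-1))}$ at the simple zeros ``lets one conclude $f=c_{1}e^{\alpha_{1}z/n}+b$''. That jump is where the work is. The paper fixes a sign and forms $h=(f'-\sqrt{\varphi/(n(n-1))})/f$, shows $T(r,h)=S(r,f)$ via $m(r,\varphi/f)=S(r,f)$, substitutes $f'=hf+\sqrt{\varphi/(n(n-1))}$ and $f''=(h^{2}+h')f+h\sqrt{\varphi/(n(n-1))}$ back into the definition of $\varphi$, and obtains a relation $A_{1}f+A_{2}=0$ with $A_{1},A_{2}$ built from $h$. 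Since $N(r,1/f)\ne S(r,f)$ one forces $A_{2}\equiv 0$ (hence $h=(\alpha_{1}+\alpha_{2})/(2n-1)$, a constant) and then $A_{1}\equiv 0$, and it is precisely the pair $A_{1}=A_{2}=0$ that yields the resonance $(n-1)\alpha_{1}=n\alpha_{2}$ (or its mirror) and the explicit first-order ODE $f'-\tfrac{\alpha_{1}}{n}f=\sqrt{\varphi/(n(n-1))}$. Your sketch does not supply this mechanism.

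The real gap is (II)(2). Your plan ``eliminate the top derivative between $\varphi$ and $\varphi'$ and observe that the only order-one solutions make $\varphi$ constant'' is not what actually happens, and as stated it is not a proof. The paper first uses the simple zeros of $f$ to build another small function $g=\big((2n-1)\varphi f''-[(n-1)\varphi'+(\alpha_{1}+\alpha_{2})\varphi]f'\big)/f$, so that $f''$ (and then $f'''$) can be written in terms of $f,f'$ with small coefficients; substituting back forces two identities $B_{1}\equiv B_{2}\equiv 0$ among $g/\varphi$ and $\gamma:=\varphi'/\varphi$. Feeding $f''$ into $\varphi$ reduces it to the quadratic form $\varphi=af^{2}+bff'+n(n-1)(f')^{2}$, at which point the paper invokes Li's lemma (Lemma~\ref{lemma2aa}) on $af^{2}+bff'+c(f')^{2}=d$. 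The resulting identities, in every subcase ($a\equiv 0$; $b^{2}-4ac\equiv 0$; $b^{2}-4ac\not\equiv 0$), integrate to a relation of the shape $e^{k(\alpha_{1}+\alpha_{2})z}=$ (a small function), forcing $\alpha_{1}+\alpha_{2}=0$ and contradicting the hypothesis $\alpha_{1}+\alpha_{2}\ne 0$. So the contradiction is \emph{not} that $\varphi$ is constant, but that the hypothesis $\alpha_{1}+\alpha_{2}\ne 0$ is violated; Lemma~\ref{lemma2aa} is the missing ingredient in your plan. Your treatment of the special case $n=2$, $\varphi=Pe^{Q}$ via the linear ODE for $f^{2}$ is fine and coincides with the paper's.
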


The following Examples~\ref{exappp} and \ref{exaqqq} are shown to
  illustrate the cases (II)(1) and (II)(2)
  of Theorem~\ref{colA1z1}.
\begin{exa}\label{exappp}
  $f_{0}=e^{z}-1$ is a solution of the equation
  \begin{eqnarray*}
    f^{2}+2f'+f=e^{2z}+e^{z}.
  \end{eqnarray*}
 Here $\alpha_{1}=2$, $\alpha_{2}=1$, $\alpha_{1}=2\alpha_{2}$ and $\varphi=2$. It implies that case (II)(1)(a) occurs.
\end{exa}

\begin{exa}\label{exaqqq}
  $f_{0}=e^{2z}+e^{z}$ is a solution of
  \begin{eqnarray*}
    f^{2}+\frac{1}{2}f'-\frac{1}{2}f''=e^{4z}+2e^{3z}.
  \end{eqnarray*}
  Here $\alpha_{1}=4$, $\alpha_{2}=3$, $n=2$,  $\varphi=2e^{2z}$, and $f_{0}^{2}=e^{4z}+2e^{3z}+e^{2z}$. It implies that case (II)(2) occurs.
\end{exa}

\section{Preliminary Lemmas}

The following lemma plays an important role in
uniqueness problems of meromorphic functions.

\begin{lemma}[\cite{Yi1}]\label{lemma21}
 Let $f_{j}(z)\, (j=1,\ldots,n)\, (n\geq2)$ be meromorphic
functions, and let $g_{j}(z)\, (j=1,\ldots,n)$ be entire functions satisfying
\begin{itemize}
\item[(i)] $\sum _{j=1}^{n}f_{j}(z)e^{g_{j}(z)}\equiv 0;$
\item[(ii)]when $1\leq j < k \leq n,$ then $g_{j}(z)-g_{k}(z)$ is not a constant;
\item[(iii)] when $1\leq j \leq n, 1\leq h < k \leq n$, then
$$T(r,f_{j})=o \{T(r,e^{g_{h}-g_{k}})\} \quad (r\to \infty, r\not\in E),$$
where $E\subset(1,\infty)$ is of finite linear measure or logarithmic measure.
\end{itemize}
Then, $f_{j}(z)\equiv 0\, (j=1,\ldots ,n)$.
\end{lemma}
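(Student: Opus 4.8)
The plan is to argue by contradiction using a minimal vanishing relation. Suppose the $f_j$ do not all vanish. Then $\sum_j f_j e^{g_j}\equiv 0$ is a nontrivial $\C$-linear relation among the meromorphic functions $w_j:=f_j e^{g_j}$; among all such relations I would choose one of minimal support, say $\sum_{j\in J}c_j w_j\equiv 0$ with $|J|=m\geq 2$ and all $c_j\neq 0$, and absorb the constants $c_j$ into the $f_j$ so that the relation reads $\sum_{j\in J}f_j e^{g_j}\equiv 0$ with every $f_j\not\equiv 0$ and no proper subsum vanishing. Fixing a reference index $b\in J$ and dividing by $f_b e^{g_b}$, I rewrite this as $\sum_{j\in J\setminus\{b\}}v_j\equiv -1$, where $v_j=(f_j/f_b)e^{g_j-g_b}$. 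Hypothesis (iii) applied to the pair $\{j,b\}$ gives $T(r,f_j/f_b)=o(T(r,e^{g_j-g_b}))$, so $T(r,v_j)=(1+o(1))T(r,e^{g_j-g_b})$; thus each $v_j$ is large, its characteristic being governed by the corresponding exponential difference, while the \emph{ratio} $f_j/f_b$ is negligible.

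The core is an elimination by a Wronskian/Cramer computation. Minimality of the support makes the space of $\C$-linear relations among $\{w_j\}_{j\in J}$ one-dimensional, so the $m-1$ quotients $v_j$ ($j\in J\setminus\{b\}$) are linearly independent over $\C$ and their Wronskian does not vanish identically. Differentiating $\sum_j v_j\equiv -1$ successively and writing $v_j^{(k)}=\beta_{k,j}v_j$ with $\beta_{k,j}=v_j^{(k)}/v_j$ (so $\beta_{0,j}=1$), I obtain, for $k=0,\dots,m-2$, a linear system $\sum_j\beta_{k,j}v_j=b_k$ with $b_0=-1$ and $b_k=0$ otherwise; its coefficient determinant equals $W(\{v_j\})/\prod_j v_j\not\equiv 0$. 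Cramer's rule then expresses each $v_j$ as a ratio of determinants whose entries are the $\beta_{k,j}$. Each $\beta_{k,j}$ is a differential polynomial in the logarithmic derivative $\lambda_j=v_j'/v_j=(f_j/f_b)'/(f_j/f_b)+(g_j'-g_b')$, and by the lemma on the logarithmic derivative applied to $f_j/f_b$ and to $e^{g_j-g_b}$, combined with the smallness furnished by (iii), one gets $T(r,\lambda_j)=o(T(r,e^{g_j-g_b}))$; this bound propagates to every $\beta_{k,j}$ and hence to the Cramer determinants.

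The decisive point, which I expect to be the main obstacle, is the growth comparison that converts these estimates into a contradiction: the terms $g_j'-g_b'$ produced by differentiation are not a priori small relative to \emph{every} exponential difference, so a careless reduction would spoil hypothesis (iii). I would resolve this by choosing the reference index at the top of the growth scale. Let $P(r):=\max_{a<c,\,a,c\in J}T(r,e^{g_a-g_c})$, and let $(a^{*},b^{*})$ be a pair attaining this maximum on an unbounded set of radii (possible since there are finitely many pairs); take $b=b^{*}$. Then for every $j\in J$ we have $T(r,e^{g_j-g_{b^{*}}})\leq P(r)$, whence $m(r,g_j'-g_{b^{*}}')=O(\log^{+}P(r)+\log r)=o(P(r))$ and therefore $T(r,\lambda_j)=o(P(r))$ for all $j$. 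The Cramer representation then yields $T(r,v_j)=o(P(r))$ for every $j$, as $r\to\infty$ outside a set of finite measure. But $T(r,v_{a^{*}})=(1+o(1))T(r,e^{g_{a^{*}}-g_{b^{*}}})=(1+o(1))P(r)$ along the unbounded set on which the maximum is attained, forcing $P(r)=o(P(r))$ there, which is absurd since $P(r)\to\infty$. Hence no nontrivial relation exists and $f_j\equiv 0$ for all $j$. The base case $m=2$ is the special instance in which $v_{a^{*}}\equiv -1$ is constant and the contradiction collapses to $T(r,e^{g_{a^{*}}-g_{b^{*}}})=T(r,f_{b^{*}}/f_{a^{*}})+O(1)$, which contradicts (iii) directly.
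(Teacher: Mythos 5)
Note first that the paper contains no proof of this lemma: it is imported verbatim from Yang--Yi \cite{Yi1}, where the standard argument is an induction on $n$ --- divide by $e^{g_n}$, differentiate to eliminate one exponential, verify via the lemma on the logarithmic derivative that the new coefficients are still small relative to every $T(r,e^{g_h-g_k})$, and reduce to $n-1$ terms. Your proof is a correct, self-contained alternative in the classical Borel style: minimality of the support is exactly what makes the Wronskian step legitimate (any $\C$-linear dependence among the $v_j$, $j\in J\setminus\{b\}$, would multiply back by $f_be^{g_b}$ to a shorter relation among the $f_je^{g_j}$, so $W(\{v_j\})\not\equiv 0$ and Cramer applies), and your anchoring of the reference index at a maximal pair is the right substitute for the inductive bookkeeping: since $T(r,e^{g_j-g_{b^*}})\leq P(r)$ for all $j\in J$, the logarithmic-derivative lemma applied to the zero-free entire function $e^{g_j-g_{b^*}}$ gives $T(r,g_j'-g_{b^*}')=O(\log^+P(r)+\log r)=o(P(r))$, and the $N$-part of $T(r,(f_j/f_b)'/(f_j/f_b))$ is bounded by $2T(r,f_j/f_b)+O(1)=o(P(r))$ by (iii), so every $\beta_{k,j}$, hence every Cramer determinant, is $o(P(r))$. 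What this buys over the textbook induction is that hypothesis (iii) is tested only against the single quantity $P(r)$ instead of being re-verified at each inductive stage. Two details should be made explicit to close the argument: the pair $(a^{*},b^{*})$ must attain the maximum on an unbounded set that avoids the finitely many exceptional sets of finite linear or logarithmic measure produced by (iii) and by the logarithmic-derivative estimates --- this is available because the finitely many attainment sets cover all large radii, so one of them meets the complement of the exceptional sets in an unbounded set; and $\log r=o(P(r))$ requires the observation that $e^{g_{a^{*}}-g_{b^{*}}}$ is transcendental, which is precisely where hypothesis (ii) enters. Finally, your base case $m=2$ also quietly disposes of the degenerate possibility that some $v_j$ is constant when $m\geq 3$: $v_j\equiv c$ is itself a two-term relation $f_je^{g_j}-cf_be^{g_b}\equiv 0$, contradicting minimality, so for $m\geq 3$ no entry of the Cramer system degenerates and the contradiction $P(r)=o(P(r))$ stands.
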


\begin{lemma}(the Clunie lemma \cite{Laine}) \label{lemma24}
Let $f$ be a transcendental meromorphic solution of the equation:
 \begin{equation*}
   f^{n}P(z,f)=Q(z,f),
 \end{equation*}
where $P(z,f)$ and $Q(z,f)$ are polynomials in $f$ and
its derivatives with meromorhphic coefficients
$\{a_{\lambda}|\lambda\in I\}$ such that
$m(r,a_{\lambda})=S(r,f)$ for all $\lambda\in I$. If the total
degree of $Q(z,f)$ as a polynomial in $f$ and its derivatives
is at most $n$, then $m(r,P(z,f))=S(r,f)$.
\end{lemma}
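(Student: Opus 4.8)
The plan is to estimate the proximity function $m(r,P(z,f))$ directly by splitting the circle $|z|=r$ into the arc where $|f|$ is small and the arc where $|f|$ is large, and on each arc to rewrite every monomial of the relevant polynomial in terms of logarithmic derivatives $f^{(j)}/f$, whose proximity functions are controlled by the lemma on the logarithmic derivative. Concretely, I would write
$$P(z,f)=\sum_{\lambda\in I}a_{\lambda}\prod_{j=0}^{k}\bigl(f^{(j)}\bigr)^{n_{\lambda,j}},\qquad Q(z,f)=\sum_{\mu\in J}b_{\mu}\prod_{j=0}^{k}\bigl(f^{(j)}\bigr)^{m_{\mu,j}},$$
and set $d_{\lambda}=\sum_{j}n_{\lambda,j}$ and $d_{\mu}=\sum_{j}m_{\mu,j}$, so that by hypothesis $d_{\mu}\le n$ for every $\mu\in J$. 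For fixed $r$ I would partition $[0,2\pi)$ into $E_{1}=\{\theta:|f(re^{i\theta})|<1\}$ and $E_{2}=\{\theta:|f(re^{i\theta})|\ge 1\}$, so that $m(r,P)=\frac{1}{2\pi}\int_{E_{1}}\log^{+}|P|\,d\theta+\frac{1}{2\pi}\int_{E_{2}}\log^{+}|P|\,d\theta$.

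On $E_{1}$ I would not use the equation at all. Factoring $f^{d_{\lambda}}$ out of the $\lambda$-th monomial yields $a_{\lambda}f^{d_{\lambda}}\prod_{j}(f^{(j)}/f)^{n_{\lambda,j}}$; since $|f|<1$ and $d_{\lambda}\ge 0$ we have $|f|^{d_{\lambda}}\le 1$, whence
$$|P|\le\sum_{\lambda}|a_{\lambda}|\prod_{j}\Bigl|\frac{f^{(j)}}{f}\Bigr|^{n_{\lambda,j}}\qquad\text{on }E_{1}.$$
Taking $\log^{+}$, integrating, and using the elementary inequalities $\log^{+}\bigl(\sum_{k}x_{k}\bigr)\le\sum_{k}\log^{+}x_{k}+\log N$ and $\log^{+}\bigl(\prod_{k}x_{k}\bigr)\le\sum_{k}\log^{+}x_{k}$ bounds the $E_{1}$-integral by $\sum_{\lambda}m(r,a_{\lambda})+\sum_{\lambda,j}n_{\lambda,j}\,m(r,f^{(j)}/f)+O(1)$, which is $S(r,f)$ by the coefficient hypothesis $m(r,a_{\lambda})=S(r,f)$ and the lemma on the logarithmic derivative $m(r,f^{(j)}/f)=S(r,f)$.

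On $E_{2}$ I would invoke the equation in the form $P=Q/f^{n}$. Writing $Q/f^{n}=\sum_{\mu}b_{\mu}f^{d_{\mu}-n}\prod_{j}(f^{(j)}/f)^{m_{\mu,j}}$ and using the degree hypothesis $d_{\mu}\le n$, the exponent $d_{\mu}-n$ is nonpositive, so on $E_{2}$ (where $|f|\ge 1$) we get $|f|^{d_{\mu}-n}\le 1$ and hence $|P|\le\sum_{\mu}|b_{\mu}|\prod_{j}|f^{(j)}/f|^{m_{\mu,j}}$. The same estimate as above bounds the $E_{2}$-integral by $\sum_{\mu}m(r,b_{\mu})+\sum_{\mu,j}m_{\mu,j}\,m(r,f^{(j)}/f)+O(1)=S(r,f)$. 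Adding the two contributions gives $m(r,P)=S(r,f)$.

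The only genuinely delicate ingredient is the lemma on the logarithmic derivative, which supplies $m(r,f^{(j)}/f)=S(r,f)$ for every $j$; everything else is degree bookkeeping and the splitting trick. The role of the hypothesis $\deg Q\le n$ becomes transparent in this scheme: it is exactly what forces every exponent $d_{\mu}-n$ to be nonpositive, so that on the large-$|f|$ arc $E_{2}$ no positive power of $f$ survives once the equation has been applied; if $\deg Q$ were permitted to exceed $n$ this step would break down. Finally, transcendence of $f$ is needed only to guarantee that the accumulated error terms are $o(T(r,f))$ rather than merely $O(1)$, so that the notation $S(r,f)$ is meaningful.
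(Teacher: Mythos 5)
Your proof is correct and is essentially the classical argument for Clunie's lemma: the paper itself gives no proof (it cites Laine's monograph), and the proof there is exactly your splitting of the circle into $E_{1}=\{|f|<1\}$, where $P$ is estimated directly after factoring out $f^{d_{\lambda}}$, and $E_{2}=\{|f|\ge 1\}$, where the equation is used in the form $P=Q/f^{n}$ so that the degree hypothesis $d_{\mu}\le n$ kills all positive powers of $f$, with the lemma on the logarithmic derivative absorbing everything into $S(r,f)$. Your closing remarks on the roles of the degree bound and of transcendence are also accurate, so nothing needs to be changed.
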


\begin{lemma}(the Weierstrass factorization theorem \cite{Conway}) \label{lemma26}
Let $f(z)$ be an entire function, and $a_{1}, a_{2}, \ldots$ denote all nonzero
 zeros of $f(z)$ repeated according to multiplicity, suppose also that $f(z)$ has
 a zero at $z=0$ of multiplicity $m\geq0$. Then there exists an entire function
$g(z)$ and a sequence of nonnegatvie integers $p_{1}, p_{2},\cdots$ such that
\begin{eqnarray*}
  f(z)=E(z)e^{g(z)},
\end{eqnarray*}
where $E(z)=z^{m}\prod_{n=1}^{\infty} E_{p_{n}}\left(\frac{z}{a_{n}}\right)$ is
the canonical product formed by the zeros of $f(z)$, and $E_{n}(z)$ is given by
$$E_{0}(z)=1-z;\; E_{n}(z)=(1-z)exp(z+z^{2}/2+\cdots+z^{n}/n),\, n\geq1.$$
\end{lemma}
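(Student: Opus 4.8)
The plan is to prove the factorization in three stages: first construct an entire function $E(z)$ whose zeros, counted with multiplicity, coincide exactly with those of $f$; then observe that the quotient $f/E$ is a zero-free entire function; finally produce an entire logarithm of that quotient, which yields the exponential factor $e^{g}$ and hence the stated representation $f=E\,e^{g}$.

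For the first stage I would build $E(z)$ as the Weierstrass canonical product
\[
E(z)=z^{m}\prod_{n=1}^{\infty}E_{p_{n}}\!\left(\frac{z}{a_{n}}\right),
\]
using the elementary factors defined in the statement. The factor $z^{m}$ accounts for the zero of multiplicity $m$ at the origin, while each $E_{p_{n}}(z/a_{n})$ contributes a simple zero at $a_{n}$, so that the product has precisely the zeros of $f$ with their multiplicities. The heart of this stage is convergence of the infinite product on every compact set. I would invoke the standard estimate $|1-E_{p}(w)|\le |w|^{p+1}$ valid for $|w|\le 1$, which follows by expanding $\log E_{p}$ and bounding the tail of its power series. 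Since $f$ is entire and not identically zero, its zeros $a_{n}$ have no finite accumulation point, so $|a_{n}|\to\infty$; hence for any fixed $R>0$ one has $|z/a_{n}|\le 1$ for all large $n$ whenever $|z|\le R$. Choosing the exponents, for instance $p_{n}=n$, makes $\sum_{n}(R/|a_{n}|)^{n+1}$ convergent, and the tail estimate then gives absolute and locally uniform convergence of $\sum_{n}\log E_{p_{n}}(z/a_{n})$ on $|z|\le R$. Consequently the product defines an entire function $E(z)$ with the desired zero set.

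For the second stage, since $E$ and $f$ have the same zeros with the same multiplicities, the quotient $h(z):=f(z)/E(z)$ extends holomorphically across each common zero and is therefore entire with no zeros at all. For the third stage I would use that $\mathbb{C}$ is simply connected: the logarithmic derivative $h'/h$ is entire, so it possesses an entire antiderivative $g_{0}$, and then $\bigl(h\,e^{-g_{0}}\bigr)'=0$ forces $h=c\,e^{g_{0}}$ for some nonzero constant $c$. Absorbing $c$ into the exponent by writing $g=g_{0}+\log c$ yields an entire function $g$ with $h=e^{g}$, whence $f=E\,e^{g}$, as claimed.

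I expect the main obstacle to be the convergence argument in the first stage: one must exhibit the exponents $p_{n}$ explicitly and justify, via the tail estimate for the elementary factors together with $|a_{n}|\to\infty$, that the logarithmic series converges locally uniformly. The remaining steps—removability of the common zeros in the quotient and the existence of a holomorphic logarithm on the simply connected plane—are routine once convergence has been secured.
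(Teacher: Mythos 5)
Your proposal is correct: the paper itself gives no proof of this lemma, citing it as the classical Weierstrass factorization theorem from Conway's book, and your three-stage argument (canonical product via the elementary-factor estimate $|1-E_{p}(w)|\le |w|^{p+1}$ with the choice $p_{n}=n$, removability of common zeros in $f/E$, and an entire logarithm obtained by antidifferentiating $h'/h$ on the simply connected plane) is precisely the standard proof found in that reference. No gaps; the convergence step you flagged as the main obstacle is handled correctly, since $|a_{n}|\to\infty$ dominates the tail of $\sum_{n}(R/|a_{n}|)^{n+1}$ by a geometric series on every disc $|z|\le R$.
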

A well known fact about Lemma~\ref{lemma26} asserts that $\sigma(E)=\lambda(f)\leq \sigma(f)$, and $\sigma(f)=\sigma(e^{g})$ when $\lambda(f)<\sigma(f)$.

The following lemma, which is a slight generalization of
Tumura--Clunie type theorem,  is referred to
\cite[Corollary]{Hua}, can also see \cite[Theorem 4.3.1]{chen2011}.
\begin{lemma}(\cite{chen2011,Hua})\label{lemma22}
  Suppose that $f(z)$ is meromorphic and not constant in the plane, that
  \begin{equation*}
    g(z)=f(z)^{n}+P_{n-1}(f),
  \end{equation*}
 where $P_{n-1}(f)$ is a differential polynomial  of degree at most $n-1$ in $f$, and that
  \begin{equation*}
    N(r,f)+N\left(r,\frac{1}{g}\right)=S(r,f).
  \end{equation*}
  Then $g(z)=(f+\gamma)^{n}$, where $\gamma$ is meromorphic and $T(r,\gamma)=S(r,f)$.

\end{lemma}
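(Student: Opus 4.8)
The plan is to run a Tumura--Clunie type argument driven by the logarithmic derivative $\alpha:=g'/g$, which the hypotheses force to be a small function, and then to \emph{linearize} $f$ so that the differential polynomial $P_{n-1}(f)$ collapses into an ordinary polynomial in $f$ with small coefficients. Throughout I take $f$ transcendental (the substantive case, and the one where Lemma~\ref{lemma24} applies). First I would record that $T(r,\alpha)=S(r,f)$: the lemma on the logarithmic derivative gives $m(r,g'/g)=S(r,g)$, while the poles of $g'/g$ are simple and occur only among the zeros and poles of $g$, so $N(r,g'/g)\le N(r,g)+N(r,1/g)=S(r,f)$, using $N(r,f)=S(r,f)$ (hence $N(r,g)=S(r,f)$) together with $N(r,1/g)=S(r,f)$. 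Since $g$ is a differential polynomial of degree $n$ in $f$ with small coefficients and $N(r,f)=S(r,f)$, one has $T(r,g)=O(T(r,f))$, whence $S(r,g)=S(r,f)$ and $\alpha$ is indeed small.

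Next I would differentiate $g=f^{n}+P_{n-1}(f)$ and insert $g'=\alpha g$. Rearranging the identity $nf^{n-1}f'+P_{n-1}(f)'=\alpha f^{n}+\alpha P_{n-1}(f)$ gives
\[
f^{n-1}\bigl(nf'-\alpha f\bigr)=\alpha P_{n-1}(f)-P_{n-1}(f)',
\]
whose right-hand side is a differential polynomial in $f$ of degree at most $n-1$ with small coefficients (multiplying by $\alpha$ and differentiating do not raise the degree of a differential monomial). Applying the Clunie lemma (Lemma~\ref{lemma24}) with the factor $f^{n-1}$ yields $m(r,nf'-\alpha f)=S(r,f)$; since the poles of $nf'-\alpha f$ lie over the poles of $f$ and of $\alpha$, also $N(r,nf'-\alpha f)=S(r,f)$. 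Hence $\beta:=nf'-\alpha f$ is a small function, so $f$ satisfies the first-order linear equation $nf'=\alpha f+\beta$ with small coefficients. I expect this to be the main obstacle: coaxing $nf'-\alpha f$ to be small is precisely what tames all the derivatives, and it requires both the small-function status of $\alpha$ and a correct Clunie bookkeeping.

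Granting $nf'=\alpha f+\beta$, an immediate induction gives $f^{(k)}=A_{k}f+B_{k}$ for every $k$, with $A_{k},B_{k}$ small. Substituting these expressions into each differential monomial turns $P_{n-1}(f)$ into an honest polynomial $c_{n-1}f^{n-1}+\cdots+c_{0}$ in $f$ with small coefficients $c_{i}$. Setting $\gamma:=c_{n-1}/n$, which is small, and completing the $n$-th power, I can write $g=(f+\gamma)^{n}+R(f)$, where $R$ is a polynomial in $f$ of degree at most $n-2$ with small coefficients.

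To finish I would put $h:=f+\gamma$, so that $h$ again obeys a linear relation $nh'=\alpha h+\beta_{1}$ with $\beta_{1}$ small, and $g=h^{n}+R(h)$ with $\deg_{h}R\le n-2$. Feeding this back into $g'=\alpha g$ and substituting $h'=(\alpha h+\beta_{1})/n$ produces a polynomial identity in $h$ with small-function coefficients whose only term of degree $n-1$ is $\beta_{1}h^{n-1}$. Since $f$, hence $h$, is transcendental, no nontrivial such identity can hold, so every coefficient vanishes; in particular $\beta_{1}\equiv 0$. Then $g'/g=nh'/h$, so $g/h^{n}$ is constant, and comparing the top degrees in $g=h^{n}+R(h)$ with $\deg_{h}R\le n-2$ forces that constant to equal $1$ and $R\equiv 0$. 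This gives $g=(f+\gamma)^{n}$ with $T(r,\gamma)=S(r,f)$, which is the assertion of the lemma.
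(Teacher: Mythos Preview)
The paper does not prove Lemma~\ref{lemma22}; it is quoted from Hua \cite{Hua} (see also \cite{chen2011}) and used as a black box. Your proposal is a correct reconstruction of the classical Tumura--Clunie argument that underlies those references: pass to $\alpha=g'/g$, use the hypotheses to make $\alpha$ small, apply Clunie to force $nf'-\alpha f$ small, linearize all derivatives so that $P_{n-1}(f)$ becomes an ordinary polynomial in $f$ with small coefficients, complete the $n$-th power, and kill the remainder by a second pass through $g'=\alpha g$. The bookkeeping is right, including the step where a polynomial identity in the transcendental function $h=f+\gamma$ with small coefficients forces all coefficients to vanish. Two minor points you might make explicit in a final write-up: (i) $g\not\equiv 0$ (otherwise $f^{n}=-P_{n-1}(f)$ and Clunie gives $T(r,f)=S(r,f)$), so $\alpha$ is well defined; (ii) the statement as printed does not spell out that the coefficients of $P_{n-1}(f)$ are small functions of $f$, but this is the standing convention in the cited sources and is what you are (correctly) using.
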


\begin{lemma}\cite{LiP2011} \label{lemma2aa}
Suppose that $f$ is a transcendental meromorphic function,
$a,b,c,d$ are small functions with respect to $f$ and
$acd\not\equiv 0$. If
\begin{eqnarray*}
 af^{2}+bff'+c(f')^{2}=d,
\end{eqnarray*}
then
\begin{eqnarray*}
  c(b^{2}-4ac)\frac{d'}{d}+b(b^{2}-4ac)-c(b^{2}-4ac)'+(b^{2}-4ac)c'=0.
\end{eqnarray*}
\end{lemma}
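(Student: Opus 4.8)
The plan is to reduce the stated identity to a single logarithmic-derivative equation and then to read that equation off from the value distribution of $f$. Writing $\Delta=b^{2}-4ac$, the conclusion is trivial when $\Delta\equiv0$, so I assume $\Delta\not\equiv0$; dividing the asserted relation by $\Delta$ and then by $c$ shows it is equivalent to
\[
\frac{\Delta'}{\Delta}=\frac{b}{c}+\frac{c'}{c}+\frac{d'}{d},\qquad\text{i.e.}\qquad\Big(\log\tfrac{\Delta}{cd}\Big)'=\frac{b}{c}.
\]
Completing the square in the hypothesis $af^{2}+bff'+c(f')^{2}=d$ gives the identity
\[
F^{2}=\Delta f^{2}+4cd,\qquad F:=2cf'+bf,
\]
which is the backbone of the argument.

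First I would collect the value-distribution facts forced by the equation. Dividing the hypothesis by $f^{2}$ gives $d/f^{2}=a+b(f'/f)+c(f'/f)^{2}$, so the lemma on the logarithmic derivative yields $m(r,1/f)=S(r,f)$ and hence $N(r,1/f)=T(r,f)+S(r,f)$. A local inspection of leading terms supplies the pole/zero structure: at a pole of $f$ the term $c(f')^{2}$ produces an unmatched highest-order pole, impossible since the right-hand side is $d$, so $N(r,f)=S(r,f)$; at a zero of $f$ of order $\ge2$ the whole left-hand side would vanish while $d\not\equiv0$, so all but $S(r,f)$ of the zeros of $f$ are simple. Consequently $N(r,f'/f)=\overline{N}(r,f)+\overline{N}(r,1/f)=T(r,f)+S(r,f)$, and with $m(r,f'/f)=S(r,f)$ this gives $T(r,f'/f)=T(r,f)+S(r,f)$. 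In particular $f'/f$ is \emph{not} a small function of $f$; this is the statement that will be contradicted.

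The heart of the proof is a factorisation. From $F^{2}-4cd=\Delta f^{2}$ I factor
\[
U:=F-\sqrt{\Delta}\,f,\qquad V:=F+\sqrt{\Delta}\,f,\qquad UV=F^{2}-\Delta f^{2}=4cd,
\]
the last equality being exactly the hypothesis. Since $N(r,f)=S(r,f)$ one gets $N(r,U)=N(r,V)=S(r,f)$, while $U=4cd/V$ shows the zeros of $U$ sit among the poles of $V$ and the zeros of $cd$, so $N(r,1/U)=N(r,1/V)=S(r,f)$ too. Hence $\sigma:=U'/U$ and $\tau:=V'/V$ are small functions, with $\sigma+\tau=(cd)'/(cd)$. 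Now $U'=\sigma U$ and $V'=\tau V$, written out, are two second-order \emph{linear} equations for $f$ with small coefficients; subtracting them eliminates $f''$ and leaves a first-order relation
\[
\big(\sqrt{\Delta}+c\rho\big)f'+\Big((\sqrt{\Delta})'+\tfrac{b}{2}\rho-\tfrac{\sqrt{\Delta}}{2}\tfrac{(cd)'}{cd}\Big)f=0,\qquad\rho:=\sigma-\tau.
\]
If the coefficient of $f'$ were not identically zero, this would express $f'/f$ as a quotient of small functions, forcing $T(r,f'/f)=S(r,f)$ and contradicting the previous paragraph. Therefore both coefficients vanish: the first gives $\rho=-\sqrt{\Delta}/c$, and substituting this into the second and dividing by $\sqrt{\Delta}$ yields $\tfrac12(\Delta'/\Delta)=\tfrac{b}{2c}+\tfrac12(cd)'/(cd)$, which is precisely the reduced identity $\Delta'/\Delta=b/c+c'/c+d'/d$.

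The main obstacle is that $\sqrt{\Delta}$ need not be single-valued, so $U$ and $V$ are in general only the two branches of a two-valued algebraic function and need not be meromorphic on $\mathbb{C}$. I would resolve this by passing to the double cover $\{(z,w):w^{2}=\Delta(z)\}$, on which $w=\sqrt{\Delta}$ and hence $U,V$ are single-valued meromorphic; because $\Delta$ is small ($T(r,\Delta)=S(r,f)$) the branch points are sparse and all characteristics change only by $S(r,f)$, so every estimate above survives. The branch involution $w\mapsto-w$ interchanges $U\leftrightarrow V$ and $\sigma\leftrightarrow\tau$, hence sends $\rho\mapsto-\rho$ and merely multiplies the displayed first-order relation by $-1$; thus that relation, the ratio $f'/f$ it defines, and the final identity are all invariant and descend to the plane. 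The second delicate point, and the real engine of the argument, is the smallness $T(r,\sigma)=T(r,\tau)=S(r,f)$: it is what upgrades the single nonlinear equation into two honest linear equations for $f$, and it is exactly the feature that forbids the branch-point (multivalued) behaviour a non-meromorphic solution would display.
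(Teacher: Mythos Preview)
The paper does not prove this lemma: it is quoted verbatim from \cite{LiP2011} (Li, 2011) with no argument supplied, so there is no ``paper's own proof'' to compare against. I therefore assess your proof on its own merits.

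Your reduction is correct: with $\Delta=b^{2}-4ac$, the asserted identity is vacuous when $\Delta\equiv0$, and for $\Delta\not\equiv0$ it is equivalent to $\Delta'/\Delta=b/c+c'/c+d'/d$. The value-distribution preliminaries are also fine: from $d/f^{2}=a+b(f'/f)+c(f'/f)^{2}$ one gets $m(r,1/f)=S(r,f)$; a local leading-term check gives $N(r,f)=S(r,f)$ and that zeros of $f$ are (outside an $S(r,f)$ set) simple; hence $T(r,f'/f)=T(r,f)+S(r,f)$, so $f'/f$ cannot be small. The factorisation $UV=4cd$ with $U=F-\sqrt{\Delta}\,f$, $V=F+\sqrt{\Delta}\,f$, $F=2cf'+bf$, and the derivation of the first-order relation in $f,f'$ by subtracting $U'=\sigma U$ from $V'=\tau V$, are algebraically correct; the final substitution $\rho=-\sqrt{\Delta}/c$ indeed yields $\Delta'/\Delta=b/c+(cd)'/(cd)$ as claimed.

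The only genuinely delicate point is the one you flag: $\sqrt{\Delta}$ need not be meromorphic, so $U,V,\sigma,\tau,\rho$ live a priori only on a two-sheeted cover. Your remedy is sound in spirit but would need a sentence or two more to be airtight. Two clean ways to close it: (i) observe that $U,V$ are the two determinations of a $2$-valued algebroid function (they satisfy $X^{2}-2FX+4cd=0$), and invoke the logarithmic-derivative lemma for algebroid functions to get $T(r,\sigma),T(r,\tau)=S(r,f)$; or (ii) note, as you essentially do, that the branch involution sends $\rho\mapsto-\rho$ and $\sqrt{\Delta}\mapsto-\sqrt{\Delta}$, so the quotient $\rho/\sqrt{\Delta}$ and the whole first-order relation divided by $\sqrt{\Delta}$ are invariant and hence descend to honest meromorphic small functions on $\mathbb{C}$; then the contradiction ``$f'/f$ small'' is obtained entirely in the plane. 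With either refinement the proof is complete.
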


\begin{lemma}\label{lemma31add41}
Let $\alpha_{1},\, \alpha_{2}$ and $a$ be nonzero constants, and $P_{m}(z)$ be a non-vanishing
polynomial. Then the differential equation
\begin{eqnarray}\label{lemma31add31eq1}
 y''-(\alpha_{1}+\alpha_{2})y'+\alpha_{1}\alpha_{2}y=P_{m}(z)e^{az}
\end{eqnarray}
has a special solution $y^{*}=R(z)e^{az}$, where $R(z)$ is a nonzero
 polynomial with $\deg R \leq  \deg P_{m}+2$.
\end{lemma}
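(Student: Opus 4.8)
The plan is to construct $y^{*}$ explicitly by the method of undetermined coefficients, exploiting the fact that the operator on the left factors as $(D-\alpha_{1})(D-\alpha_{2})$ with $D=d/dz$, so that $\alpha_{1},\alpha_{2}$ are exactly its characteristic roots. First I would substitute the ansatz $y^{*}=R(z)e^{az}$ with $R$ a polynomial to be determined. Writing $(y^{*})'=(R'+aR)e^{az}$ and $(y^{*})''=(R''+2aR'+a^{2}R)e^{az}$ and cancelling the common factor $e^{az}$ (which never vanishes), equation \eqref{lemma31add31eq1} reduces to
\begin{equation*}
R''+\bigl(2a-\alpha_{1}-\alpha_{2}\bigr)R'+(a-\alpha_{1})(a-\alpha_{2})R=P_{m}(z),
\end{equation*}
where I have used the identity $a^{2}-(\alpha_{1}+\alpha_{2})a+\alpha_{1}\alpha_{2}=(a-\alpha_{1})(a-\alpha_{2})$. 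Thus it suffices to produce a polynomial $R$ solving this reduced equation with the stated degree bound, and since $P_{m}\not\equiv 0$ any such $R$ is automatically nonzero.

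Second, I would split into three cases according to how many of $\alpha_{1},\alpha_{2}$ coincide with $a$, i.e.\ according to the resonance structure. If $a\neq\alpha_{1}$ and $a\neq\alpha_{2}$, the zeroth-order coefficient $(a-\alpha_{1})(a-\alpha_{2})$ is nonzero, so matching coefficients from the top degree downward yields a unique polynomial $R$ with $\deg R=\deg P_{m}$, its leading coefficient being that of $P_{m}$ divided by $(a-\alpha_{1})(a-\alpha_{2})$. If exactly one root equals $a$, say $a=\alpha_{1}\neq\alpha_{2}$, the zeroth-order term drops out while the coefficient $2a-\alpha_{1}-\alpha_{2}=\alpha_{1}-\alpha_{2}$ of $R'$ is nonzero; setting $S=R'$ reduces matters to the first-order equation $S'+(\alpha_{1}-\alpha_{2})S=P_{m}$, which has a polynomial solution of degree $\deg P_{m}$, whence $\deg R=\deg P_{m}+1$. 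If $a=\alpha_{1}=\alpha_{2}$, both lower-order coefficients vanish and the reduced equation is simply $R''=P_{m}$, solved by integrating twice to give $\deg R=\deg P_{m}+2$. In every case $R$ is a nonzero polynomial with $\deg R\leq\deg P_{m}+2$, which is the claim.

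The computation is routine; the only point requiring care is the case analysis, since the degree of $R$ genuinely rises by $1$ or $2$ exactly when $a$ hits one or both characteristic roots $\alpha_{1},\alpha_{2}$ (resonance). Tracking these resonant cases is what produces the sharp bound $\deg P_{m}+2$, while the non-resonant case alone would only give $\deg R=\deg P_{m}$. I expect no genuine analytic obstacle beyond verifying that the undetermined-coefficient systems are solvable, which holds because the relevant leading coefficients are nonzero in each case.
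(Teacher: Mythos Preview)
Your proposal is correct and follows essentially the same approach as the paper: substitute the ansatz $y^{*}=R(z)e^{az}$, reduce to a polynomial ODE for $R$, and split into the three resonance cases $a\notin\{\alpha_{1},\alpha_{2}\}$, $a$ equals exactly one of them, and $a=\alpha_{1}=\alpha_{2}$, obtaining $\deg R=m,\ m+1,\ m+2$ respectively. The only cosmetic differences are that the paper writes out the undetermined-coefficient linear systems explicitly (invoking Cramer's rule), whereas you argue by matching leading coefficients and, in the simple-resonance case, by the substitution $S=R'$.
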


\begin{proof}
Set
\begin{eqnarray}\label{lemma31add31eq0}
  P_{m}(z)=a_{m}z^{m}+a_{m-1}z^{m-1}+\cdots+a_{1}z+a_{0},\quad a_{m}\neq 0.
\end{eqnarray}
We guess
\begin{eqnarray*}\label{lemma31add31eq2}
 y^{*}=R(z)e^{az},\quad \textrm{where}\,
R(z)\, \textrm{ is a polynomial},
\end{eqnarray*}
maybe a special solution of \eqref{lemma31add31eq1}.
By substituting $y^{*}$, $(y^{*})'$, $(y^{*})''$ into equation \eqref{lemma31add31eq1}, and
eliminating $e^{az}$, we get
\begin{eqnarray}\label{lemma31add31eq5}
 R''+(2a-\alpha_{1}-\alpha_{2})R'+\left(a^{2}-
 a(\alpha_{1}+\alpha_{2})+\alpha_{1}\alpha_{2}\right)R=P_{m}(z).
\end{eqnarray}
 We derive the polynomial
solution $R(z)$ by using the method of undetermined coefficients.

Case I. $a\neq \alpha_{1}$ and $a\neq \alpha_{2}$. Then $a^{2}-
 a(\alpha_{1}+\alpha_{2})+\alpha_{1}\alpha_{2}\neq 0$.
 We choose $R(z)$ is a polynomial with degree $m$ as follow,
\begin{eqnarray}\label{lemma31add31eq6}
 R(z)=b_{m}z^{m}+b_{m-1}z^{m-1}+\cdots+b_{1}z+b_{0}.
\end{eqnarray}
By substituting  \eqref{lemma31add31eq0} and \eqref{lemma31add31eq6}
into \eqref{lemma31add31eq5}, comparing the coefficients of the same power of
$z$ at  both sides of equation \eqref{lemma31add31eq5}, we
get the following system of linear equations,
\begin{equation*}
\left\{
\begin{aligned}
&a_{m}= \left(a^{2}-
 a(\alpha_{1}+\alpha_{2})+\alpha_{1}\alpha_{2}\right)b_{m},\\
&a_{m-1}=\left(a^{2}-
 a(\alpha_{1}+\alpha_{2})+\alpha_{1}\alpha_{2}\right)b_{m-1}+
 (2a-\alpha_{1}-\alpha_{2})mb_{m},\\
 &a_{i}=\left(a^{2}-
 a(\alpha_{1}+\alpha_{2})+\alpha_{1}\alpha_{2}\right)b_{i}
 +(2a-\alpha_{1}-\alpha_{2})(i+1)b_{i+1}+(i+2)(i+1)b_{i+2},
 \\
&\quad \quad \quad \quad \quad \quad \quad \quad \quad \quad \quad \quad
  \quad \quad \quad \quad \quad \quad \quad \quad \quad \quad \quad \quad
   i =m-2,\ldots,1,0.\\
\end{aligned}
\right.
\end{equation*}
Since $a^{2}-
 a(\alpha_{1}+\alpha_{2})+\alpha_{1}\alpha_{2}\neq 0$, we can solve $b_{i}\, (i=0,1,\ldots,m)$ by using Cramer's rule to the above system.

Case II. $\alpha_{1}\neq \alpha_{2}$, and either $a=\alpha_{1}$ or $a= \alpha_{2}$.
Then $2a-\alpha_{1}-\alpha_{2}\neq 0$,  and \eqref{lemma31add31eq5} reduces to
 \begin{eqnarray}\label{lemma31add31eq7}
 R''+(2a-\alpha_{1}-\alpha_{2})R'=P_{m}(z).
\end{eqnarray}
 We choose $R(z)$ is a polynomial with degree $m+1$ as follow,
\begin{eqnarray}\label{lemma31add31eq8}
 R(z)=c_{m+1}z^{m+1}+c_{m}z^{m}+\cdots+c_{1}z.
\end{eqnarray}
By substituting  \eqref{lemma31add31eq0} and \eqref{lemma31add31eq8}
into \eqref{lemma31add31eq7}, comparing the coefficients of the same power of
$z$ at  both sides of equation \eqref{lemma31add31eq7}, we
get the following system of linear equations,
\begin{equation*}
\left\{
\begin{aligned}
a_{m}&= (2a-\alpha_{1}-\alpha_{2})(m+1)c_{m+1},\\
a_{i}&=(2a-\alpha_{1}-\alpha_{2})(i+1)c_{i+1}+
 (i+2)(i+1)c_{i+2},\,i =m-1,\ldots,1,0. \\
\end{aligned}
\right.
\end{equation*}
Since $2a-\alpha_{1}-\alpha_{2}\neq 0$, we can solve $c_{i}\, (i=1,\ldots,m+1)$
by using Cramer's rule to the above system.

Case III. $a=\alpha_{1}=\alpha_{2}$. Then $2a-\alpha_{1}-\alpha_{2}=0$, $a^{2}-
 a(\alpha_{1}+\alpha_{2})+\alpha_{1}\alpha_{2}= 0$, and \eqref{lemma31add31eq5} reduces to
\begin{eqnarray}\label{lemma31add31eq9}
 R''=P_{m}(z).
\end{eqnarray}
 We choose $R(z)$ is another polynomial with degree $m+2$ as follow,
\begin{eqnarray}\label{lemma31add31eq10}
 R(z)=d_{m+2}z^{m+2}+d_{m+1}z^{m+1}+\cdots+d_{2}z^{2}.
\end{eqnarray}
By substituting  \eqref{lemma31add31eq0} and \eqref{lemma31add31eq10}
into \eqref{lemma31add31eq9}, comparing the coefficients of the same power of
$z$ at  both sides of equation \eqref{lemma31add31eq9}, we
get the following system of linear equations,
\begin{equation*}
\left\{
\begin{aligned}
a_{m}&= (m+2)(m+1)d_{m+2},\\
a_{m-1}&= (m+1)md_{m+1},\\
&\cdots \\
a_{0}&=2d_{2}. \\
\end{aligned}
\right.
\end{equation*}
 Obviously, we can solve $d_{i}\, (i=2,\ldots,m+2)$ directly from the above system.
\end{proof}

\begin{lemma}\label{lemma23}
Let $n\geq2$ be  integers and $P_{d}(f)$ denote an algebraic differential polynomial in
$f(z)$ of degree $d\leq n-1$ with small functions of $f$ as coefficients. If $p_{1}(z),\, p_{2}(z)$
are small functions of $f$,  $\alpha_{1}(z), \alpha_{2}(z)$ are nonconstant entire functions and if $f$
is a  transcendental meromorphic solution of the equation
\begin{eqnarray}\label{lem2.3zzz1}
  f^{n}+P_{d}(f)=p_{1}e^{\alpha_{1}}+p_{2}e^{\alpha_{2}}
\end{eqnarray}
with $N(r,f)=S(r,f)$, then we have
$T(r,f)=O\left(  T\left(r,p_{1}e^{\alpha_{1}} +p_{2}e^{\alpha_{2}} \right)\right)$,
$T(r, p_{1}e^{\alpha_{1}}+p_{2}e^{\alpha_{2}})=O(T(r,f))$,
and $T(r, f^{n}+P_{d}(f))\neq S(r,f)$.
\end{lemma}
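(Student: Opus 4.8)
The goal is to show that $f$ and the right-hand side $H := p_1 e^{\alpha_1} + p_2 e^{\alpha_2}$ have comparable characteristic functions, and that the left-hand combination $f^n + P_d(f)$ is not itself a small function. The plan is to exploit the algebraic identity $f^n + P_d(f) = H$ together with the valence relation $N(r,f)=S(r,f)$ and the fact that $\deg P_d \le n-1$, so that the dominant term of the left side is genuinely $f^n$.

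The first inequality, $T(r, H) = O(T(r,f))$, should be the easy direction. Since $H = f^n + P_d(f)$, I would estimate $T(r,H) \le T(r,f^n) + T(r,P_d(f)) + O(1)$. The term $T(r,f^n) = nT(r,f) + O(1)$ is $O(T(r,f))$. For $P_d(f)$, a differential polynomial of degree $d \le n-1$ with small coefficients, the standard bound $m(r, P_d(f)) \le d\, m(r,f) + S(r,f)$ (via the logarithmic derivative lemma) together with the pole estimate $N(r,P_d(f)) = S(r,f)$, which follows from $N(r,f)=S(r,f)$ since $f$ has few poles and the coefficients are small, yields $T(r,P_d(f)) = O(T(r,f))$. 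Hence $T(r,H)=O(T(r,f))$.

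For the reverse inequality $T(r,f) = O(T(r,H))$, the plan is to write $f^n = H - P_d(f)$ and bound $T(r,f^n)$ in terms of $T(r,H)$. The obstacle here is that $P_d(f)$ itself depends on $f$, so one cannot simply move it to the other side without control. I would instead apply the Clunie-type machinery: rewrite the equation as $f^n(1 + P_d(f)/f^n) = H$, or more robustly use Lemma~\ref{lemma24} (the Clunie lemma) in the form that, since the total degree of $P_d(f)$ in $f$ and its derivatives is at most $n-1 < n$, the ``small'' part is controlled and $m(r,f)$ is dominated by contributions from $H$. The cleanest route is probably to observe $nT(r,f) = T(r,f^n)+O(1) \le T(r,H)+T(r,P_d(f))+O(1)$, and since $T(r,P_d(f)) \le n\,m(r,f)+ \varepsilon T(r,f)$ type bounds do not immediately close the loop, I would use a first-main-theorem argument on the auxiliary function and the assumption $N(r,f)=S(r,f)$ to absorb the derivative terms. \emph{I expect this reverse estimate to be the main obstacle}, precisely because one must prevent the differential polynomial $P_d(f)$ from secretly carrying as much information as $f^n$; the degree condition $d\le n-1$ and the smallness of coefficients are exactly what must be leveraged, likely through the Clunie lemma or the Valiron--Mohon'ko theorem applied to the rational-in-$f$ structure.

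Finally, for the statement $T(r, f^n + P_d(f)) \neq S(r,f)$, note this quantity equals $T(r,H)$ by the equation, so it suffices to rule out $T(r,H)=S(r,f)$. Suppose for contradiction that $H$ is a small function of $f$. Then $f^n + P_d(f) = H$ exhibits $f$ as satisfying an algebraic differential equation of degree $n$ whose right-hand side and all coefficients are small; combined with $N(r,f)=S(r,f)$, this forces (by the already-established $T(r,f)=O(T(r,H))$) that $T(r,f)=S(r,f)$, contradicting that $f$ is transcendental. Thus $T(r,f^n+P_d(f))=T(r,H)\neq S(r,f)$, which also feeds back to confirm both comparability estimates are nontrivial.
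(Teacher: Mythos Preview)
Your overall strategy matches the paper's, but you left the reverse estimate $T(r,f)=O(T(r,H))$ unfinished for no good reason. You already invoked the standard bound $m(r,P_d(f))\le d\,m(r,f)+S(r,f)$, and you noted $N(r,f)=S(r,f)$ forces $N(r,P_d(f))=S(r,f)$. Combine these directly: since $N(r,f)=S(r,f)$ we have $nT(r,f)=m(r,f^n)+S(r,f)$, and $f^n=H-P_d(f)$ gives
\[
nT(r,f)\le m(r,H)+m(r,P_d(f))+S(r,f)\le T(r,H)+d\,T(r,f)+S(r,f),
\]
so $(n-d)T(r,f)\le T(r,H)+S(r,f)$. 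Because $d\le n-1$, this closes immediately; no Clunie or Valiron--Mohon'ko is needed. Your worry that ``the loop does not close'' came from writing $n\,m(r,f)$ instead of $d\,m(r,f)$ in the bound for $T(r,P_d(f))$.

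This is exactly the paper's argument for the reverse direction. For the third claim, your contradiction via the already-established reverse inequality is fine and is equivalent to the paper's direct appeal to the Clunie lemma on $f^n=\beta-P_d(f)$: both rest on the degree gap $n-d\ge 1$ together with $N(r,f)=S(r,f)$.
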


\begin{proof}
By the proof of \cite[Theorem 1.3]{Zhang2018} (or \cite[Lemma 2.4.2.Clunie lemma]{Laine}), we get that
\begin{eqnarray}\label{lem2.2aaa1}
  m\left(r,P_{d}(f) \right)\leq d m(r,f) +S(r,f).
\end{eqnarray}

By combining  \eqref{lem2.3zzz1}, \eqref{lem2.2aaa1} with $N(r,f)=S(r,f)$, we get that
\begin{eqnarray*}
  nT(r,f)&=&T(r,f^{n}) \leq m\left(r,p_{1}e^{\alpha_{1}} +p_{2}e^{\alpha_{2}} \right)+m\left(r,P_{d}(f) \right)+S(r,f)\\
    &\leq&  T\left(r,p_{1}e^{\alpha_{1}} +p_{2}e^{\alpha_{2}} \right)+ dT(r,f)+S(r,f).
\end{eqnarray*}
This gives that
\begin{eqnarray*}
  (n-d)T(r,f)\leq T\left(r,p_{1}e^{\alpha_{1}} +p_{2}e^{\alpha_{2}} \right)+S(r,f),
\end{eqnarray*}
i.e.,
\begin{eqnarray}\label{aaa000}
 T(r,f)=O\left(  T\left(r,p_{1}e^{\alpha_{1}} +p_{2}e^{\alpha_{2}} \right)\right).
\end{eqnarray}

From \eqref{lem2.2aaa1}, $N(r,f)=S(r,f)$ and equation \eqref{lem2.3zzz1}, we
can also get
\begin{eqnarray}\label{aaa001}
  T(r, p_{1}e^{\alpha_{1}}+p_{2}e^{\alpha_{2}})=O(T(r,f)).
\end{eqnarray}

Next, we prove that $T(r, f^{n}+P_{d}(f))$ can not be
a small function of $f$.  Otherwise, we will have $f^{n}+P_{d}(f)=\beta$
with $T(r,\beta)=S(r,f)$. Thus $f^{n}=\beta-P_{d}(f)$.
Since $d\leq n-1$,  from Lemma~\ref{lemma24}, we get $m(r,f)=S(r,f)$. Then  $T(r,f)=S(r,f)$  since $N(r,f)=S(r,f)$, a contradiction.

\end{proof}







\section{Proof of Theorem~\ref{thmABABnew1}.}

Let $f$ be a transcendental meromorphic solution of the equation \eqref{thmABABneweq1} with  $N(r,f)=S(r,f)$.

Since
\begin{eqnarray*}
 N(r,f)+N\left(r,\frac{1}{R(z)e^{\alpha(z)}} \right) = S(r,f),
\end{eqnarray*}
by Lemma~\ref{lemma22} we get
\begin{eqnarray*}
 (f-\gamma)^{n} = R(z)e^{\alpha(z)},\quad T(r,\gamma)=S(r,f).
\end{eqnarray*}
Thus we have
\begin{eqnarray*}
  f=s(z)e^{\alpha(z)/n}+\gamma(z),
\end{eqnarray*}
where $s$ and $\gamma$ are small functions of $f$ with $s^{n}=R$.

\section{Proof of Theorem~\ref{thmA1z1}.}

Let $f$ be a  transcendental meromorphic solution of the equation \eqref{aaaa1} with $\lambda_{f}<\sigma(f)$. Then $f$ is of regular growth, and we have
\begin{eqnarray}\label{thmA1pfeqnnn1}
  N(r,f)=S(r,f),\; \textrm{and} \; N(r,1/f)=S(r,f).
\end{eqnarray}

 By combining with Lemma~\ref{lemma23}, we have
 \begin{equation}\label{thmA1pfeq113}
  T(r, f^{n}+P_{*}(f)\neq S(r,f),
 \end{equation}
and
\begin{eqnarray}\label{thmA1pfeqnnn2bbb}
  \sigma(f)=\sigma(p_{1}e^{\alpha_{1}} +p_{2}e^{\alpha_{2}})=\max\{\deg\alpha_{1}, \deg \alpha_{2}\}.
 \end{eqnarray}

Therefore, by Lemma~\ref{lemma26}   we can factorize $f(z)$ as
\begin{eqnarray}\label{thmA1pfeqnnm1}
 f(z)=\frac{d_{1}(z)}{d_{2}(z)}e^{g(z)}=d(z)e^{g(z)},
\end{eqnarray}
where $g$ is a polynomial with $\deg g=\sigma(f)=\max\{\deg\alpha_{1}, \deg \alpha_{2}\} \geq 1$, $d_{1}$ and $d_{2}$
are the canonical products formed by zeros and poles of $f$ with $\sigma(d_{1})=\lambda(f)<\sigma(f)$ and
 $\sigma(d_{2})=\lambda(1/f)<\sigma(f)$.

Next we assert that $\deg \alpha_{1} =\deg \alpha_{2}$. Otherwise,
we have $\deg \alpha_{1} \neq \deg \alpha_{2}$.

 Suppose that $\deg \alpha_{1}< \deg \alpha_{2}$, then $T(r,e^{\alpha_{1}})=S(r,e^{\alpha_{2}})$. From Lemma~\ref{lemma23}, we get
\begin{equation*}\label{thmA1pfeq111case1eq2}
   (1+o(1))T(r,e^{\alpha_{2}})=T(r, p_{1}e^{\alpha_{1}}+p_{2}e^{\alpha_{2}})\leq K _{1} T(r,f), \quad
   K _{1}>0,
\end{equation*}
which means that a small function of $e^{\alpha_{2}}$
is also a small function of $f$. So we have $T(r,e^{\alpha_{1}})=S(r,f)$.
We rewritten \eqref{aaaa1} as follow:
\begin{equation}\label{thmA1pfeq111case102}
  f^{n}(z)+P_{*}(f)-p_{1}e^{\alpha_{1}}=p_{2}e^{\alpha_{2}}.
\end{equation}
Therefore, by using Theorem~\ref{thmABABnew1}, we get that
$f=s_{0}(z)\exp(\alpha_{2}(z) /n)+t_{0}(z)$, where $s_{0},t_{0}$ are small functions
 of $f$ with $s_{0}^{n}=p_{2}$. If $t_{0}\not\equiv 0$, then
 combining \eqref{thmA1pfeqnnn1} with Nevanlinna's Second Main Theorem,  we have
 \begin{eqnarray*}
    T(r,f)\leq
   N\left(r,\frac{1}{f-t_{0}}\right)+ N\left(r,\frac{1}{f}\right)+N(r,f)+S(r,f)=S(r,f),
 \end{eqnarray*}
a contradiction. So we have $t_{0}\equiv 0$. Moreover, we also have  that $s_{0}$ is a rational function because of the
 fact that $p_{2}$ is a rational function. Substituting $f=s_{0}(z)\exp(\alpha_{2}(z) /n)$ into
 \eqref{thmA1pfeq111case102}, we get that
 \begin{eqnarray*}
   p_{1}e^{\alpha_{1}}=P_{*}(f)=R_{n-1}e^{\frac{n-1}{n}\alpha_{2}}
   +\cdots+R_{1}e^{\frac{1}{n}\alpha_{2}}+R_{0},
 \end{eqnarray*}
where $R_{0},\, R_{1},\ldots, R_{n-1}$ are rational functions. By using Lemma~\ref{lemma21} and $\deg\alpha_{2}>\deg \alpha_{1}>0$, we get that $p_{1}\equiv 0$, a contradiction.

Suppose that $\deg \alpha_{1} > \deg \alpha_{2}$, we can also get a contradiction  as in the case $\deg \alpha_{1}< \deg \alpha_{2}$.

Therefore, $\deg \alpha_{1} = \deg \alpha_{2}$. By combining with \eqref{thmA1pfeqnnn2bbb} and \eqref{thmA1pfeqnnm1},
we have  $\sigma(f)=\deg g=\deg \alpha_{1} = \deg \alpha_{2}$, and
$S(r,f)=S(r,e^{\alpha_{1}})=S(r,e^{\alpha_{2}})$.

{\bf Case 1.}
 $(\alpha_{2}-\alpha_{1})'=0$. Then $\alpha_{2}-\alpha_{1}$ is a constant,
by equation \eqref{aaaa1}, we get
\begin{equation*}
  f^{n}(z)+P_{*}(f)=(p_{1}+p_{2}c_{2})e^{\alpha_{1}}
  =\left(\frac{1}{c_{2}}p_{1}+p_{2}\right)e^{\alpha_{2}},
\end{equation*}
where $c_{2}=e^{\alpha_{2}-\alpha_{1}}$ is a non-zero constant. Obviously, from \eqref{thmA1pfeq113} we have that
 $p_{1}+p_{2}c_{2}\neq 0$ and
 $\frac{1}{c_{2}}p_{1}+p_{2}\neq 0$.
Therefore, by using Theorem~\ref{thmABABnew1}, we get that
$f=s_{1}(z)\exp(\alpha_{1}(z) /n)+t_{1}(z)=s_{2}(z)\exp(\alpha_{2}(z) /n)+t_{2}(z)$, where $s_{1},t_{1}, s_{2},t_{2}$ are small functions
 of $f$ with $s_{1}^{n}=p_{1}+p_{2}c_{2}$ and $s_{2}^{n}=\frac{1}{c_{2}}p_{1}+p_{2}$. Combining \eqref{thmA1pfeqnnn1} with Nevanlinna's Second Main Theorem, we have $t_{1}\equiv 0$ and $t_{2}\equiv 0$.
 From $p_{1},\, p_{2}$ are rational functions,
 we have $s_{1}$ and $s_{2}$ are rational functions. This belongs to Case I in Theorem~\ref{thmA1z1}.

 {\bf Case 2.} $(\alpha_{2}-\alpha_{1})'\neq0$.  By differentiating both sides of \eqref{aaaa1}, we have
\begin{eqnarray}\label{theorem1.1aaa1}
  nf^{n-1}f'+P_{*}'(f)=(p_{1}'+p_{1}\alpha_{1}')e^{\alpha_{1}}
  +(p_{2}'+p_{2}\alpha_{2}')e^{\alpha_{2}}.
\end{eqnarray}
Obviously, we have that $p_{1}'+p_{1}\alpha_{1}'\not\equiv0$ and
$p_{2}'+p_{2}\alpha_{2}'\not\equiv0$. Otherwise, we will
get that $p_{1}=c_{0}e^{-\alpha_{1}}$ and $p_{2}=c_{1}e^{-\alpha_{2}}$, where $c_{0},\, c_{1}\in \mathbb{C}\setminus \{0\}$, which contradict with the fact that $\alpha_{1}, \alpha_{2}$ are nonconstant polynomials, and
 $p_{1}, p_{2}$ are non-vanishing rational functions.

By eliminating $e^{\alpha_{2}}$   from equations \eqref{aaaa1} and
\eqref{theorem1.1aaa1}, we have
\begin{eqnarray}\label{theorem1.1aaa2}
 (p_{2}'+p_{2}\alpha_{2}')f^{n}-np_{2}f^{n-1}f'
 +Q_{1}(f)=A_{1}e^{\alpha_{1}},
\end{eqnarray}

 where
 \begin{eqnarray}\label{theorem1.1nnn8}
  A_{1}=p_{1}\left(p_{2}'+p_{2}\alpha_{2}'\right)-p_{2}\left(p_{1}'+p_{1} \alpha_{1}'\right),
 \end{eqnarray}
 and
  \begin{eqnarray}\label{theorem1.1nnn7}
    Q_{1}(f)=\left(p_{2}'+p_{2}\alpha_{2}'\right)P_{*}-p_{2}P_{*}'.
  \end{eqnarray}


 We assert that
$A_{1}(z)\not\equiv 0$. Otherwise, if $A_{1}(z)\equiv 0$, then we have
\begin{eqnarray*}
  \left(p_{2}'+p_{2}\alpha_{2}'\right)p_{1}=p_{2}\left(p_{1}'+p_{1} \alpha_{1}'\right).
\end{eqnarray*}
Therefore
\begin{eqnarray}\label{thmA1pfeq114}
p_{2}e^{\alpha_{2}}=c_{3}p_{1}e^{\alpha_{1}}, \quad c_{3}\in \mathbb{C}\setminus\{0\}.
\end{eqnarray}
 So we get $\alpha_{2}-\alpha_{1}$
is a constant, a contradiction with the assumption $(\alpha_{2}-\alpha_{1})'\neq0$.
Therefore, $A_{1}(z)\not\equiv 0$.

By differentiating \eqref{theorem1.1aaa2}, we have
\begin{eqnarray}\label{thmA1pfeq111case3eqnnn3}
  &&(p_{2}'+p_{2}\alpha_{2}')'f^{n}+np_{2}\alpha_{2}'f^{n-1}f'
  -np_{2}(n-1)f^{n-2}(f')^{2}-np_{2}f^{n-1}f''
  +Q_{1}'(f)\nonumber \\
  &&\quad =(A_{1}'+A_{1}\alpha_{1}')e^{\alpha_{1}}.
\end{eqnarray}

By eliminating $e^{\alpha_{1}}$ from equations \eqref{theorem1.1aaa2} and \eqref{thmA1pfeq111case3eqnnn3}, we obtain
\begin{eqnarray}\label{theorem1.1aaa5}
&&f^{n-2} \varphi=Q(f),
\end{eqnarray}
where
\begin{eqnarray}\label{rrraaa}
 \varphi&=&\left((A_{1}'+A_{1}\alpha_{1}')(p_{2}'+p_{2}\alpha_{2}')
-A_{1}(p_{2}'+p_{2}\alpha_{2}')'\right)f^{2}
+n(n-1)p_{2}A_{1}(f')^{2}\nonumber\\
&&\; -np_{2}\left(A_{1}'+A_{1}(\alpha_{1}'+\alpha_{2}')  \right)ff'+np_{2}A_{1}ff''.
\end{eqnarray}
and
\begin{eqnarray}\label{theorem1.1aaa8}
  Q(f)=A_{1}Q_{1}'(f)-(A_{1}'+A_{1}\alpha_{1}')Q_{1}(f).
\end{eqnarray}

Next we discuss two cases.

{\bf Subcase 2.1.} $Q(f)\equiv 0$. Then by \eqref{theorem1.1aaa5}, we have
$\varphi\equiv 0$, i.e.,
\begin{eqnarray}\label{theorem1.1aaabbb8}
&&\left((A_{1}'+A_{1}\alpha_{1}')(p_{2}'+p_{2}\alpha_{2}')
-A_{1}(p_{2}'+p_{2}\alpha_{2}')'\right)f^{2}
=np_{2}\left(A_{1}'+A_{1}(\alpha_{1}'+\alpha_{2}')  \right)ff'\nonumber\\
&&\quad -n(n-1)p_{2}A_{1}(f')^{2}-np_{2}A_{1}ff''.
\end{eqnarray}

Next we assert that $f$ has at most finitely many zeros and poles.
Otherwise,   $f$ has infinitely many zeros or poles.

Suppose that $f$ has infinitely many zeros.
let $z_{0}$
be a zero of $f$ with multiplicity $k$ but neither a zero nor a pole of the coefficients
in equation \eqref{theorem1.1aaabbb8}, then $k\geq 2$ and $f(z)=a_{k}(z-z_{0})^{k}+a_{k+1}(z-z_{0})^{k+1}+\cdots \, (a_{k}\neq 0)$ holds
in some small neighborhood of $z_{0}$.

If $(A_{1}'+A_{1}\alpha_{1}')(p_{2}'+p_{2}\alpha_{2}')
-A_{1}(p_{2}'+p_{2}\alpha_{2}')'\equiv 0$, then we have
\begin{eqnarray*}
  \frac{A_{1}'}{A_{1}}+\alpha_{1}'
  =\frac{(p_{2}'+p_{2}\alpha_{2}')'}{p_{2}'+p_{2}\alpha_{2}'}.
\end{eqnarray*}
This gives that
\begin{eqnarray*}
 A_{1}e^{\alpha_{1}}=c_{4}(p_{2}'+p_{2}\alpha_{2}'),\quad c_{4}\in \mathbb{C}\setminus\{0\},
\end{eqnarray*}
which yields a contradiction with $A_{1}(\not\equiv 0),\, p_{2}'+p_{2}\alpha_{2}'(\not\equiv 0)$ are rational functions, and $\alpha_{1}$ is a nonconstant polynomial.  Therefore, $(A_{1}'+A_{1}\alpha_{1}')(p_{2}'+p_{2}\alpha_{2}')
-A_{1}(p_{2}'+p_{2}\alpha_{2}')'\not\equiv 0$.

Obviously, $z_{0}$ is a zero with  multiplicity $2k$ of the left side of \eqref{theorem1.1aaabbb8}. As to the right
side, the coefficient of $(z-z_{0})^{2k-2}$ is
\begin{eqnarray*}
  -nkp_{2}A_{1}((n-1)k+(k-1))a_{k}^{2},
\end{eqnarray*}
which can not equal to zero when $n, k\geq 2$. Therefore, $z_{0}$ is a zero with
multiplicity $2k-2$ of the right side of \eqref{theorem1.1aaabbb8}. This is a contradiction.

 Suppose that $f$ has infinitely many poles. Let $z_{1}$ be a pole of $f$ with multiplicity $m$ but
 neither a zero nor a pole of the coefficients in equation \eqref{theorem1.1aaabbb8}, then
$f(z)=\frac{a_{-m}}{(z-z_{1})^{m}}+\frac{a_{-m+1}}{(z-z_{1})^{m-1}}+\cdots \, (a_{-m}\neq 0)$ holds in some small neighborhood of $z_{1}$.
Obviously, $z_{1}$ is a pole with  multiplicity $2m$ of the left side of \eqref{theorem1.1aaabbb8}. As to the right
side, the coefficient of $(z-z_{0})^{-2(m+1)}$ is
\begin{eqnarray*}
  -nmp_{2}A_{1}((n-1)m+(m+1))a_{-m}^{2},
\end{eqnarray*}
which can not equal to zero when $m\geq 1$ and $n\geq 2$. Therefore, $z_{1}$ is a pole with
multiplicity $2(m+1)$ of the right side of \eqref{theorem1.1aaabbb8}. This is a contradiction.

Therefore, $f$ has at most finitely many zeros and poles. So
\begin{eqnarray}\label{thmA1pfeqnnm2}
 f(z)=d(z)e^{g(z)},
\end{eqnarray}
where $g$ is a polynomial with  $\deg g=\deg \alpha_{1} = \deg \alpha_{2}\geq 1$, and $d$ is a rational function.

By substituting \eqref{thmA1pfeqnnm2} into equation \eqref{aaaa1}, we get that
\begin{eqnarray}\label{thmA1pfeqnnm3}
  d^{n}e^{ng}+\widetilde{R}_{n-1}e^{(n-1)g}+\cdots+\widetilde{R}_{1}e^{g}
  +\widetilde{R}_{0}=p_{1}e^{\alpha_{1}}+p_{2}e^{\alpha_{2}},
\end{eqnarray}
where $\widetilde{R}_{0}, \widetilde{R}_{1}, \ldots, \widetilde{R}_{n-1}$ are rational functions.

If neither $ng(z)-\alpha_{1}(z)$ nor $ng(z)-\alpha_{2}(z)$ are constants, then
by Lemma~\ref{lemma21}, we get that $d(z)\equiv 0$, which yields a contradiction.

If $ng(z)-\alpha_{1}(z)$ is a constant, then $ng(z)-\alpha_{2}(z)$ is not a constant, otherwise we have $\alpha_{2}(z)-\alpha_{1}(z)$ is a constant, which yields a contradiction.  We set $ng(z)-\alpha_{1}(z)=c_{5}$, then
\eqref{thmA1pfeqnnm3} can be reduced to
\begin{eqnarray*}
  (d^{n}-p_{1}e^{-c_{5}})e^{ng}+\widetilde{R}_{n-1}e^{(n-1)g}+\cdots+\widetilde{R}_{1}e^{g}
  +\widetilde{R}_{0}-p_{2}e^{\alpha_{2}}=0.
\end{eqnarray*}
By Lemma~\ref{lemma21}, there must exists some integer $k_{1}\,  (1\leq k_{1}\leq n-1)$ such that
\begin{eqnarray*}
 k_{1}g'=\alpha_{2}' \;\; \textrm{and} \;\; d^{n}-p_{1}e^{-c_{5}}=0.
\end{eqnarray*}
Therefore, by combining with \eqref{thmA1pfeqnnm2} we have
\begin{eqnarray*}
 f(z)=s_{3}(z)e^{\frac{\alpha_{1}(z)}{n}},
\end{eqnarray*}
where $s_{3}^{n}=p_{1}$, and $k_{1}\alpha_{1}'=n\alpha_{2}'$.

If $ng(z)-\alpha_{2}(z)$ is a constant, then $ng(z)-\alpha_{1}(z)$ is not a constant, following the similar reason, we have
\begin{eqnarray*}
 f(z)=s_{4}(z)e^{\frac{\alpha_{2}(z)}{n}},
\end{eqnarray*}
where $s_{4}^{n}=p_{2}$, and $k_{2}\alpha_{2}'=n\alpha_{1}'\, (1\leq k_{2}\leq n-1)$.

{\bf Subcase 2.2.} $Q(f)\not\equiv 0$. By combining Logarithmic Derivative Lemma
with \eqref{rrraaa}, we get
\begin{eqnarray}\label{thmA1pfeqnnm4add1}
  m\left(r, \frac{\varphi}{f^{2}}\right)=S(r,f).
\end{eqnarray}
We rewritten \eqref{theorem1.1aaa5} as follow,
\begin{eqnarray}\label{thmA1pfeqnnm4add2}
  f^{n-1}\frac{\varphi}{f} =Q(f).
\end{eqnarray}
From \eqref{rrraaa}, we have
\begin{eqnarray}\label{thmA1pfeqnnm4add3}
  \frac{\varphi}{f}&=&\left((A_{1}'+A_{1}\alpha_{1}')(p_{2}'+p_{2}\alpha_{2}')
-A_{1}(p_{2}'+p_{2}\alpha_{2}')'\right)f
+n(n-1)p_{2}A_{1}\frac{f'}{f}\cdot f'\nonumber\\
&&\; -np_{2}\left(A_{1}'+A_{1}(\alpha_{1}'+\alpha_{2}')  \right)f'+np_{2}A_{1}f'',
\end{eqnarray}
is a polynomial in $f,\, f'$ and $f''$ with meromorhphic coefficients
 such that
\begin{eqnarray*}
  m\left(r, (A_{1}'+A_{1}\alpha_{1}')(p_{2}'+p_{2}\alpha_{2}')
-A_{1}(p_{2}'+p_{2}\alpha_{2}')'\right)=S(r,f),\; m(r,p_{2}A_{1})=S(r,f),
\end{eqnarray*}
\begin{eqnarray*}
 m\left(r,p_{2}A_{1}\frac{f'}{f}\right)=S(r,f),\; \textrm{and}\; m\left(r,p_{2}\left(A_{1}'+A_{1}(\alpha_{1}'+\alpha_{2}') \right)\right)=S(r,f).
\end{eqnarray*}

By
combining with \eqref{thmA1pfeqnnm4add2}, \eqref{thmA1pfeqnnm4add3}, \eqref{theorem1.1aaa8}, and Lemma~\ref{lemma24}, we have that
\begin{eqnarray}\label{thmA1pfeqnnm4}
  m\left(r, \frac{\varphi}{f}\right)=S(r,f).
\end{eqnarray}

From\eqref{thmA1pfeqnnn1}, \eqref{rrraaa}, \eqref{thmA1pfeqnnm4add1} and \eqref{thmA1pfeqnnm4}, we get that
\begin{eqnarray*}
  2T(r,f)+S(r,f)&=&T\left(r,\frac{1}{f^{2}}\right)= m\left(r,\frac{1}{f^{2}}\right)+S(r,f)\\
  &\leq& m\left(r,\frac{\varphi}{f^{2}}\right)+m\left(r,\frac{1}{\varphi}\right)+S(r,f)\\
  &\leq& T(r,\varphi)+S(r,f)\\
  &\leq& m\left(r,\frac{\varphi}{f} \right)+m(r,f)+S(r,f)\\
   &\leq& T(r,f)+S(r,f),
\end{eqnarray*}
which yields a contradiction.




\section{Proof of Theorem~\ref{colA1z1}.}
Let $f$ be a  transcendental meromorphic solution of the equation \eqref{aaaa122} with $N(r,f)=S(r,f)$. By Lemma~\ref{lemma23}, we have that $f$ is of finite order
and
\begin{eqnarray}\label{vvvaaa}
 \sigma(f)=\sigma(p_{1}e^{\alpha_{1}z}+p_{2}e^{\alpha_{2}z})=1.
\end{eqnarray}

If $N(r,1/f)=S(r,f)$, by the proof of Theorem~\ref{thmA1z1}, we can get the conclusion.

Next, we consider the case when $N(r,1/f)\neq S(r,f)$. By differentiating \eqref{aaaa122}, we get
\begin{eqnarray}\label{col1.1bbb1}
  nf^{n-1}f'+P_{*}'(f)=p_{1}\alpha_{1}e^{\alpha_{1}z}+p_{2}\alpha_{2}e^{\alpha_{2}z}
\end{eqnarray}

By eliminating  $e^{\alpha_{2}z}$ from \eqref{aaaa122} and \eqref{col1.1bbb1}, we have
\begin{eqnarray}\label{col1.1bbb2}
 \alpha_{2}f^{n}+\alpha_{2}P_{*}(f)-nf^{n-1}f'-P_{*}'(f)=p_{1}(\alpha_{2}-\alpha_{1})e^{\alpha_{1}z}.
\end{eqnarray}
Differentiating \eqref{col1.1bbb2} yields
\begin{eqnarray}\label{col1.1bbb3}
  n\alpha_{2}f^{n-1}f'+\alpha_{2}P_{*}'
  -n(n-1)f^{n-2}(f')^{2}-nf^{n-1}f''-P_{*}''
  =p_{1}\alpha_{1}(\alpha_{2}-\alpha_{1})e^{\alpha_{1}z}.\nonumber \\
\end{eqnarray}
It follows from \eqref{col1.1bbb2} and \eqref{col1.1bbb3} that
\begin{eqnarray}\label{col1.1aaa5}
f^{n-2} \varphi =
 -P_{*}''+(\alpha_{1}+\alpha_{2})P_{*}'-\alpha_{1}\alpha_{2}P_{*},
\end{eqnarray}
where
\begin{eqnarray}\label{col1.1aaa11}
  \varphi(z)=\alpha_{1}\alpha_{2}f^{2}
-n(\alpha_{1}+\alpha_{2})  ff'
 +n(n-1)(f')^{2}+nff''.
\end{eqnarray}

Next we assert that $\varphi(z)\not\equiv 0$. Otherwise, we have
\begin{eqnarray}\label{col1.1aaa9}
\alpha_{1} \alpha_{2}f^{2}
-n(\alpha_{1}+\alpha_{2})  ff'
 +n(n-1)(f')^{2}+nff''=0.
\end{eqnarray}
Since $N(r,1/f)\neq S(r,f)$, let $z_{0}$
be a zero of $f$ with multiplicity $k$. By \eqref{col1.1aaa9} we have
$k\geq 2$ and $f(z)=a_{k}(z-z_{0})^{k}+a_{k+1}(z-z_{0})^{k+1}+\cdots \, (a_{k}\neq 0)$ holds in some small neighborhood of $z_{0}$. We rewrite \eqref{col1.1aaa9}
as follow,
\begin{eqnarray}\label{col1.1aaa10}
\alpha_{1}\alpha_{2}f^{2}=
n(\alpha_{1}+\alpha_{2})ff'
 -n(n-1)(f')^{2}-nff''.
\end{eqnarray}
Obviously, $z_{0}$ is a zero with  multiplicity $2k$ of the left side of \eqref{col1.1aaa10}. As to the right
side, the coefficient of $(z-z_{0})^{2k-2}$ is
\begin{eqnarray*}
  -nk((n-1)k+(k-1))a_{k}^{2},
\end{eqnarray*}
which can not equal to zero when $n, k\geq 2$. Therefore, $z_{0}$ is a zero with
multiplicity $2k-2$ of the right side of \eqref{col1.1aaa10}. This is a contradiction. Therefore,  $\varphi(z)\not\equiv 0$.

From \eqref{col1.1aaa5} and \eqref{col1.1aaa11}, by using Lemma~\ref{lemma24} and
Logarithmic Derivative Lemma, we have
\begin{eqnarray}\label{col1.1aaa15}
m\left(r, \frac{\varphi}{f}\right)=S(r,f), \; \textrm{and}\;
m\left(r, \frac{\varphi}{f^{2}}\right)=S(r,f).
\end{eqnarray}
From \eqref{col1.1aaa15}, we have
\begin{eqnarray}\label{col1.1mmmaaa}
 2m\left(r,\frac{1}{f}\right)=m\left(r,\frac{1}{f^{2}}\right)
 \leq m\left(r,\frac{\varphi}{f^{2}}\right)+m\left(r,\frac{1}{\varphi}\right)
 \leq m\left(r,\frac{1}{\varphi}\right)+S(r,f).
\end{eqnarray}
By \eqref{col1.1aaa11}, we have
\begin{eqnarray}\label{col1.1mmmbbb}
 N\left(r,\frac{1}{f}\right)=N_{1)}\left(r,\frac{1}{f}\right)
 +N_{(2}\left(r,\frac{1}{f}\right)\leq N_{1)}\left(r,\frac{1}{f}\right)
 + N\left(r,\frac{1}{\varphi}\right)+S(r,f).
\end{eqnarray}
Combining with \eqref{col1.1mmmaaa} and \eqref{col1.1mmmbbb}, we have
\begin{eqnarray*}
T(r,f)\leq N_{1)}\left(r,\frac{1}{f}\right)+\frac{1}{2}T(r,\varphi)
+\frac{1}{2}N\left(r,\frac{1}{\varphi}\right)+S(r,f).
\end{eqnarray*}

{\bf Case 1.} $\varphi(z)$ is a nonzero constant.  Since $N(r,1/f)\neq S(r,f)$, let $z_{1}$ be a zero of $f$ with multiplicity $m$. By \eqref{col1.1aaa11} we have $n(n-1)(f')^{2}(z_{1})=\varphi\neq 0.$ Thus, $m=1$, i.e.,  $z_{1}$ is a simple zero of $f$.

{\bf Subcase 1.1.} $z_{1}$ is a
zero of $f'(z)-\sqrt{\varphi/n(n-1)}$. Then we set
\begin{eqnarray}\label{col1.1aaa13}
  h(z)=\frac{f'(z)-\sqrt{\frac{\varphi}{n(n-1)}}}{f(z)}.
\end{eqnarray}
Obviously, meromorphic function $h(z)\not\equiv 0$.   Otherwise, $f$ will be a polynomial, a contradiction. By \eqref{col1.1aaa15},  Logarithmic Derivative Lemma and $N(r,f)=S(r,f)$, we get $T(r,h)=m(r,h)+N(r,h)=S(r,f)$.  Therefore, $h(z)$ is a small function of $f$. We rewrite \eqref{col1.1aaa13} as follow,
\begin{eqnarray}\label{col1.1aaa16}
  f'=hf+\sqrt{\frac{\varphi}{n(n-1)}},
\end{eqnarray}
then,
\begin{eqnarray}\label{col1.1aaa17}
 f''=h'f+hf'=(h^{2}+h')f+h\sqrt{\frac{\varphi}{n(n-1)}}.
\end{eqnarray}
Substituting \eqref{col1.1aaa16} and \eqref{col1.1aaa17} into \eqref{col1.1aaa11}, we get that
\begin{eqnarray}\label{yyyaaa}
  A_{1}f+ A_{2}=0,
\end{eqnarray}
where
\begin{eqnarray*}
 A_{1}= \alpha_{1}\alpha_{2}-n(\alpha_{1}+\alpha_{2})h+n^{2}h^{2}+nh'
\end{eqnarray*}
and
\begin{eqnarray*}
  A_{2}=\left((2n-1)h-(\alpha_{1}+\alpha_{2})\right)\sqrt{n\varphi/(n-1)} .
\end{eqnarray*}

Suppose that $\alpha_{1}+\alpha_{2}-(2n-1)h\not\equiv 0$, then by  \eqref{yyyaaa} and $T(r,h)=S(r,f)$,
we have
\begin{eqnarray*}
  N\left(r,\frac{1}{f}\right) \leq N \left(r,\frac{1}{ A_{2}}  \right)
  +N(r,A_{1})=S(r,f),
\end{eqnarray*}
 a contradiction with the assumption that $N(r,1/f)\neq S(r,f)$. Therefore,
 combining with \eqref{yyyaaa}   we have
\begin{equation*}
\left\{
\begin{aligned}
\alpha_{1}\alpha_{2}-n(\alpha_{1}+\alpha_{2})h
  +n^{2}h^{2}+nh'&\equiv 0,\\
\alpha_{1}+\alpha_{2}-(2n-1)h& \equiv 0.\\
\end{aligned}
\right.
\end{equation*}
 Thus
\begin{eqnarray*}\label{col1.1aaa22}
  (n-1)\alpha_{1}=n\alpha_{2}\quad \textrm{or} \quad (n-1)\alpha_{2}=n\alpha_{1}.
\end{eqnarray*}

If $(n-1)\alpha_{1}=n\alpha_{2}$, then $h=\frac{\alpha_{2}}{n-1}=\frac{\alpha_{1}}{n}$, and $f'-\frac{\alpha_{1}}{n}f=\sqrt{\frac{\varphi}{n(n-1)}}.$ Thus the general solutions can be represented in the form $ f(z)=c_{1}e^{\frac{\alpha_{1}}{n}z}
-\frac{1}{\alpha_{1}}\sqrt{\frac{n\varphi}{n-1}},$
where $c_{1}$ is a constant. By substituting it into equation \eqref{aaaa122}, we
get  $c_{1}^{n}=p_{1}$.

If $(n-1)\alpha_{2}=n\alpha_{1}$, then $h=\frac{\alpha_{1}}{n-1}=\frac{\alpha_{2}}{n}$, and $f'-\frac{\alpha_{2}}{n}f=\sqrt{\frac{\varphi}{n(n-1)}}$. Thus the  solution can be represented in the form $ f(z)=c_{2}e^{\frac{\alpha_{2}}{n}z}
-\frac{1}{\alpha_{2}}\sqrt{\frac{n\varphi}{n-1}}$,
where $c_{2}$ is a constant satisfying $c_{2}^{n}=p_{2}$.

{\bf Subcase 1.2.} $z_{1}$ is a zero of  $f'(z)+\sqrt{\varphi/n(n-1)}$. By using the similar arguments
as above, we can get the conclusions that $(n-1)\alpha_{1}=n\alpha_{2}$ and  $f(z)=c_{3}e^{\frac{\alpha_{1}}{n}z}+\frac{1}{\alpha_{1}}\sqrt{\frac{n\varphi}{n-1}}$,
or $(n-1)\alpha_{2}=n\alpha_{1}$ and
$f(z)=c_{4}e^{\frac{\alpha_{2}}{n}z}+\frac{1}{\alpha_{2}}\sqrt{\frac{n\varphi}{n-1}}$,
where $c_{3},\, c_{4}$ are constants satisfying $c_{3}^{n}=p_{1}$ and $c_{4}^{n}=p_{2}$.

{\bf Case 2.}  $\varphi(z)$ is a nonconstant small function of $f$. Differentiating \eqref{col1.1aaa11} gives
\begin{eqnarray}\label{col1.1aaa23}
 \varphi'=2\alpha_{1}\alpha_{2}ff'-n(\alpha_{1}+\alpha_{2})(f')^{2}
 -n(\alpha_{1}+\alpha_{2})ff''+n(2n-1)f'f''+nff'''.\nonumber\\
\end{eqnarray}
It follows from \eqref{col1.1aaa11} and \eqref{col1.1aaa23} that
\begin{eqnarray}\label{col1.1nnn111}
&& \alpha_{1}\alpha_{2}\varphi'f^{2}
-\left[n(\alpha_{1}+\alpha_{2})\varphi'+2\alpha_{1}\alpha_{2}\varphi\right] ff'
 +n\left[(n-1)\varphi'+(\alpha_{1}+\alpha_{2})\varphi\right](f')^{2}\nonumber\\
 &&
 +n\left[(\alpha_{1}+\alpha_{2})\varphi +\varphi'\right]ff''
 -n(2n-1)\varphi f'f''-n\varphi ff'''=0.
\end{eqnarray}
Since $N(r,1/f)\neq S(r,f)$ and $T(r,\varphi)=S(r,f)$, let $z_{2}$ be a zero of $f$, which is neither
 a zero of $\varphi$ nor a pole  of the coefficients in \eqref{col1.1nnn111}, with multiplicity $l$, then by \eqref{col1.1aaa11}
 we have $l=1$, i.e.,  $z_{2}$ is a simple zero of $f$.  And it follows
from \eqref{col1.1nnn111} that $z_{2}$ is also a zero of
$\left[(n-1)\varphi'+(\alpha_{1}+\alpha_{2})\varphi\right]f'-(2n-1)\varphi f''$.

We set
\begin{eqnarray}\label{col1.1aaa24}
g=\frac{(2n-1)\varphi f''-\left[(n-1)\varphi'+(\alpha_{1}+\alpha_{2})\varphi\right]f'}{f},
\end{eqnarray}
then by combining with Logarithmic Derivative Lemma, $N(r,f)=S(r,f)$,
 and $T(r,\varphi)=S(r,f)$, we have
\begin{eqnarray*}
  T(r,g)=O\left(m(r,\varphi)+
  N\left(r,\frac{1}{\varphi}\right)+N(r,\varphi)+N(r,f)\right)+S(r,f)=S(r,f),
\end{eqnarray*}
i.e., $g$ is a small function of $f$. We rewrite \eqref{col1.1aaa24} as follow,
\begin{eqnarray}\label{col1.1aaa25}
  f''=t_{1}f'+\frac{g}{(2n-1)\varphi}f,\quad \textrm{where}\; t_{1}=\frac{1}{2n-1}\left((n-1)\frac{\varphi'}{\varphi}+\alpha_{1}+\alpha_{2} \right).
\end{eqnarray}

Differentiating \eqref{col1.1aaa25} gives that
\begin{eqnarray}\label{col1.1nnn112}
  f'''= \left(t_{1}^{2}+t_{1}'+\frac{g}{(2n-1)\varphi}\right)f'
  +\frac{1}{2n-1}\left(t_{1}\frac{g}{\varphi}+\left(\frac{g}{\varphi} \right)'\right)f.
  \end{eqnarray}

  By substituting \eqref{col1.1aaa25} and  \eqref{col1.1nnn112} into \eqref{col1.1nnn111}, combining with $\varphi\not\equiv 0$,  we get
\begin{eqnarray}\label{tttaaa}
 B_{1}f=B_{2}f',
\end{eqnarray}
where
\begin{eqnarray*}
 B_{1}=\alpha_{1}\alpha_{2}\frac{\varphi'}{\varphi}+n\left(\alpha_{1}+\alpha_{2} +\frac{\varphi'}{\varphi}\right)\frac{g}{(2n-1)\varphi} - \frac{n }{2n-1}\left(\left(\frac{g}{\varphi}\right)'+
  t_{1}\frac{g}{\varphi} \right),
\end{eqnarray*}
and
\begin{eqnarray*}
 B_{2}=n(\alpha_{1}+\alpha_{2})\left(\frac{\varphi'}{\varphi}-t_{1}\right)+
 2\alpha_{1}\alpha_{2}
-n \frac{\varphi'}{\varphi}t_{1}+\frac{ng}{\varphi}
+n\left(t_{1}'+\frac{g}{(2n-1)\varphi}+t_{1}^{2}\right).
\end{eqnarray*}

If $B_{2}\not\equiv 0$, then from \eqref{tttaaa} and $f$ is transcendental, we have $B_{1}\not\equiv 0$. Since $N(r,1/f)\neq S(r,f)$, $T(r,\varphi)=S(r,f)$, and $T(r,g)=S(r,f)$,
let $z_{3}$
be a zero of $f$ with multiplicity $q$, which is neither a zero nor a pole of  $B_{1}$ and $B_{2}$. Then  $z_{3}$ is a zero with
multiplicity $q$ of the left side of  \eqref{tttaaa}, but a zero with
multiplicity $q-1$ of the right side, which yields a contradiction.
Therefore, we have $B_{2}\equiv 0$ and $B_{1}\equiv 0$, i.e.,
\begin{eqnarray}\label{col1.1nnn113}
\left(\frac{g}{\varphi}\right)'= \left(\frac{2(n-1)}{2n-1} (\alpha_{1}+\alpha_{2}) +\frac{n}{2n-1} \gamma\right)\frac{g}{\varphi}
&&+\frac{2n-1}{n}\alpha_{1}\alpha_{2}\gamma,
\end{eqnarray}
and
\begin{eqnarray}\label{col1.1nnn114}
  -\frac{2n}{2n-1}\frac{g}{\varphi}&=&(\alpha_{1}+\alpha_{2})\gamma
 +\frac{2}{n}\alpha_{1}\alpha_{2}-\frac{1}{2n-1}(\alpha_{1}+\alpha_{2} +\gamma)(\alpha_{1}+\alpha_{2}+(n-1)\gamma)\nonumber\\
 &&+
\frac{1}{(2n-1)^{2}}(\alpha_{1}+\alpha_{2} +(n-1)\gamma)^{2}+\frac{n-1}{2n-1}\gamma',
\end{eqnarray}
where $\gamma=\frac{\varphi'}{\varphi}$.

Substituting \eqref{col1.1aaa25} into
\eqref{col1.1aaa11},
\begin{eqnarray*}
  \varphi(z) = af^{2}+bff'+n(n-1)(f')^{2}.
\end{eqnarray*}
where
\begin{eqnarray*}
 a=\alpha_{1}\alpha_{2}+\frac{n}{2n-1}\frac{g}{\varphi},\quad \textrm{and}
 \quad b=\frac{n(n-1)}{2n-1} \left(\gamma  -2(\alpha_{1}+\alpha_{2})\right).
\end{eqnarray*}

If $a\not\equiv 0$, then by Lemma~\ref{lemma2aa}, we have
\begin{eqnarray}\label{col1.1mmm111}
  &&n(n-1)(b^{2}-4an(n-1))\frac{\varphi'}{\varphi}+b(b^{2}-4an(n-1))\nonumber\\
  &&\quad -n(n-1)(b^{2}-4an(n-1))'=0.
\end{eqnarray}

Suppose that $b^{2}-4an(n-1)\not\equiv 0$. It follows from \eqref{col1.1mmm111} that
\begin{eqnarray}\label{col1.1mmm112}
  2n\frac{\varphi'}{\varphi}=(2n-1)\frac{(b^{2}-4an(n-1))'}{b^{2}-4an(n-1)} +2(\alpha_{1}+\alpha_{2}).
\end{eqnarray}
By integration, we see that there exists a $c_{5}\in \mathbb{C}\setminus\{0\}$ such that
\begin{eqnarray*}
  e^{2(\alpha_{1}+\alpha_{2})z}=c_{5}\varphi^{2n}(b^{2}-4an(n-1))^{-(2n-1)},
\end{eqnarray*}
which implies $e^{2(\alpha_{1}+\alpha_{2})z}\in S(r,f)$, then $\alpha_{2}=-\alpha_{1}$, a contradiction.

Suppose that $b^{2}-4an(n-1)\equiv 0$. Then we have
\begin{eqnarray}\label{col1.1mmm113}
\frac{n(n-1)}{(2n-1)^{2}} \left(\gamma  -2(\alpha_{1}+\alpha_{2})\right)^{2}=4
\left(\alpha_{1}\alpha_{2}+\frac{n}{2n-1}\frac{g}{\varphi}\right).
\end{eqnarray}
Differentiating \eqref{col1.1mmm113} yields
\begin{eqnarray}\label{col1.1mmm114}
&&
\frac{n-1}{2n-1} \left(\gamma-2(\alpha_{1}+\alpha_{2})\right)\gamma'
=2\left(\frac{g}{\varphi} \right)'.
\end{eqnarray}
Differentiating \eqref{col1.1nnn114} yields
\begin{eqnarray}\label{col1.1mmm1155}
  2\left(\frac{g}{\varphi}\right)'=\frac{2(n-1)}{2n-1}\gamma\gamma'-\frac{(2n+1)(n-1)}{(2n-1)n}(\alpha_{1}+\alpha_{2})\gamma'
  -\frac{n-1}{n}\gamma''.
\end{eqnarray}

Combining with \eqref{col1.1mmm114} and \eqref{col1.1mmm1155}, we obtain that
\begin{eqnarray}\label{col1.1mmm115}
n\gamma\gamma'=(\alpha_{1}+\alpha_{2})\gamma'+(2n-1)\gamma''.
\end{eqnarray}

We assert that $\gamma'\not\equiv 0$. Otherwise, by $\gamma'\equiv 0$ and $\varphi$ is
nonconstant we have
\begin{eqnarray*}
  \frac{\varphi'}{\varphi} =c_{6}, \; c_{6}\in \mathbb{C}\setminus\{0\}.
\end{eqnarray*}
Then
\begin{eqnarray*}
  \varphi=c_{7}e^{c_{6}z},  \; c_{7}\in \mathbb{C}\setminus\{0\},
\end{eqnarray*}
 which contradicts with the assumption that $\varphi$ is a nonconstant small function of $f$.

Therefore, \eqref{col1.1mmm115} gives that
\begin{eqnarray}\label{col1.1mmm116}
\alpha_{1}+\alpha_{2}=n\gamma-(2n-1)\frac{\gamma''}{\gamma'}.
\end{eqnarray}

Thus
\begin{eqnarray*}
c_{8}e^{(\alpha_{1}+\alpha_{2})z} =\varphi^{n}\left(\left( \frac{\varphi'}{\varphi} \right)'\right)^{-(2n-1)}, \; c_{8}\in \mathbb{C}\setminus \{0\},
\end{eqnarray*}
which implies that $e^{(\alpha_{1}+\alpha_{2})z}\in S(r,f)$, then
$\alpha_{2}=-\alpha_{1}$, a contradiction.

If $a\equiv 0$, that is
$\frac{g}{\varphi}=-\frac{2n-1}{n}\alpha_{1}\alpha_{2}$.
By substituting it
into \eqref{col1.1nnn113}, we get
\begin{eqnarray*}
  \frac{\varphi'}{\varphi}=2\left(\alpha_{1}+\alpha_{2} \right).
\end{eqnarray*}

So we have
\begin{eqnarray*}
  \varphi=c_{9}e^{2(\alpha_{1}+\alpha_{2} )z}, \; c_{9}\in \mathbb{C}\setminus \{0\},
\end{eqnarray*}
which implies that $e^{2(\alpha_{1}+\alpha_{2} )z}\in S(r,f)$, then
$\alpha_{2}=-\alpha_{1}$, a contradiction.

{\bf Case 3.} $n=2$ and $\varphi(z)=P(z)e^{Q(z)}$, where $P,\, Q$ are nonvanishing polynomials and
$Q$ is non-constant. By  \eqref{vvvaaa} and \eqref{col1.1aaa11},
 we get $\sigma(\varphi)\leq \sigma(f)=1$, combining with $\deg Q\geq 1$, we have $\deg Q=\sigma(\varphi)=1$. Let $Q(z)=az+b$, where $a(\neq 0),b$ are constants,
  then
$\varphi=e^{b}P e^{az}$. By \eqref{col1.1aaa5} we get that
\begin{eqnarray}\label{qqq111}
  P_{*}''-(\alpha_{1}+\alpha_{2})P_{*}'+\alpha_{1}\alpha_{2}P_{*}=-e^{b}P(z)e^{az}.
\end{eqnarray}

From Lemma~\ref{lemma31add41} and the theory of ordinary differential equations, the general solutions of equation \eqref{qqq111} can be represented in the form
\begin{eqnarray}\label{qqq222}
  P_{*}=c_{10}e^{\alpha_{1}z}+c_{11}e^{\alpha_{2}z}+R(z)e^{Q(z)},
\end{eqnarray}
where $c_{10}, c_{11}$ are constants, and $R$ is a polynomial with $\deg R \leq \deg P+2$.

By combining with \eqref{aaaa122}, we get
\begin{eqnarray*}
  f^{2}=d_{1}e^{\alpha_{1}z}+ d_{2}e^{\alpha_{2}z}-R(z)e^{Q(z)},
\end{eqnarray*}
where $d_{1}=p_{1}-c_{10}$, and $d_{2}=p_{2}-c_{11}$.

\section*{Acknowledgments}

This work was supported by  NNSF of China (No.11801215 \& No.11626112 \& No. 11371225), and the NSF of Shandong Province, P. R. China (No.ZR2016AQ20 \& No. ZR2018MA021).


\section*{References}


\def\cprime{$'$}

\end{document}